\newenvironment{stripalign}{}{}
\newcommand\aptfigure[1]{\includegraphics[height=0.95\textheight]{#1}}
\newcommand\alt[1]{}
\crefname{assumption}{assumption}{assumptions}
\Crefname{figure}{Figure}{Figures}
\crefname{equation}{}{}
\Crefname{equation}{Eq.}{Eqs.}
\setlist[enumerate,2]{label=(\alph*),ref=\theenumi(\alph*)}
\setlist[enumerate,3]{label=\roman*.,ref=\theenumii\roman*}
\DeclarePairedDelimiterXPP\set[1]{}{\{}{\}}{}{

#1}
\DeclarePairedDelimiter\N{\|}{\|}
\def\ub{\underbrace}
\def\ol{\overline}
\def\ul{\underline}
\def\wtd{\widetilde}
\def\what{\widehat}
\def\ee{\mathrm{e}}
\DeclareMathOperator{\diff}{d\!}
\DeclareMathOperator{\subspan}{span}
\DeclareMathOperator{\T}{T}
\DeclareMathOperator{\OO}{O}
\DeclareMathOperator{\oo}{o}
\newtheorem{theorem}{Theorem}[section]
\theoremstyle{definition}
\newtheorem{example}{Example}[section]
\numberwithin{equation}{section}
\numberwithin{figure}{section}
\numberwithin{table}{section}
\newcommand\clue[2]{\stackrel{\makebox[0pt][c]{\scriptsize #1}}{#2}\;}
\DeclareMathOperator{\nnz}{nnz}
\DeclareMathOperator{\nres}{NRes}
\DeclareMathOperator{\trun}{tr}
\DeclareMathOperator{\opE}{\mathrm{E}}
\newcommand\op[1]{\mathscr{ #1}}
\newcommand\ulub[4]{#1\underline{#2}#3#4}
\newcommand\ubul[4]{#1#2\underline{#3}#4}
\DeclareMathOperator{\mul}{\textsc{m}\textrm{ul}}
\DeclareMathOperator{\sol}{\textsc{s}\textrm{ol}}
\title{An RADI-type method for stochastic continuous-time algebraic Riccati equations
	\thanks{Submitted to the editors \today.}
}
\author{
Zhen-Chen Guo\thanks{School of Mathematical Sciences, Nankai University, Tianjin 200071, China.
\texttt{e-mail: guozhenchen@nankai.edu.cn}. 
Supported in part by NSFC-12371380.
} \and 
Xin Liang\thanks{Corresponding author. Yau Mathematical Sciences Center, Tsinghua University, Beijing 100084, China, and 
Yanqi Lake Beijing Institute of Mathematical Sciences and Applications, Beijing 101408, China;
\texttt{e-mail: liangxinslm@tsinghua.edu.cn}.
Supported in part by NSFC-12371380.
} 
}
\date{\today}
\begin{document}
\maketitle

\begin{abstract}
	In this paper, we propose an RADI-type method for large-scale stochastic continuous-time algebraic Riccati equations with sparse and low-rank matrices.
	This new variant of RADI-type methods is developed by integrating the core concept of the original RADI method with the implicit appearance of the left semi-tensor product in stochastic continuous-time algebraic Riccati equations.
	The method employs different shifts to accelerate  convergence and uses compression techniques to reduce storage requirements and computational complexity.
	Unlike many existing methods for large-scale problems such as Newton-type methods and homotopy method, it calculates the residual at a low cost and does not require a stabilizing initial approximation, which can often be challenging to find.
	Numerical experiments are provided to demonstrate its efficiency.
\end{abstract}

\smallskip
{\bf Key words.} 
RADI, algebraic Riccati equations, stochastic control, linear-quadratic optimal control, left semi-tensor product.

\smallskip
{\bf AMS subject classifications}. 
65F45, 15A24, 49N10, 93E20

\section{Introduction}\label{sec:introduction}
Consider the stochastic continuous-time algebraic Riccati 
equations (SCAREs)
\begin{equation}\label{eq:scare}
	\begin{multlined}[b]
		Q+A_{{0}}^{\T}X+XA_{{0}} + \sum_{i=1}^{{r-1}} A_i^{\T}XA_i
		\\=(L+XB_{{0}}+\sum_{i=1}^{{r-1}} A_i^{\T}XB_i)
		(R+\sum_{i=1}^{{r-1}} B_i^{\T}XB_i)^{-1}
		\alt{\\\cdot}
		(L^{\T}+B_{{0}}^{\T}X+\sum_{i=1}^{{r-1}} B_i^{\T}XA_i)
		,
	\end{multlined}
\end{equation}
where $A_i\in \mathbb{R}^{n\times n}, B_i\in \mathbb{R}^{n\times m}$ for $i=0,1,\dots, r-1$, $R\in \mathbb{R}^{m\times m}$ is positive definite, 
$L\in \mathbb{R}^{n\times m}$ and $Q\in \mathbb{R}^{n\times n}$ with
$\begin{bmatrix}
	Q& L \\ L^{\T} & R
\end{bmatrix}$ being positive semi-definite.
Here $r-1$ is the number of stochastic processes involved in the stochastic systems.
Clearly, when $r=1$ the SCAREs reduce 
to the classical continuous-time algebraic Riccati equations (CAREs):
$Q+A^{\T}X+XA-(L+XB)R^{-1}(L^{\T}+B^{\T}X)=0.$

The SCARE \cref{eq:scare} arises from the stochastic time-invariant control system in continuous-time subject to multiplicative white noise: 
\begin{equation}\label{eq:scare-system:scare}
	\diff x(t) 
	= (A_{0}x(t) + B_{0}u(t))\diff t+\sum_{i=1}^{r-1}(A_ix(t)+B_iu(t))\diff w_{i}(t),
\end{equation}
in which $x(t),u(t)$ 
are the state and control, respectively,
and $w(t)=\begin{bmatrix}
	w_{1}(t) &\!\cdots\! &w_{r-1}(t)
\end{bmatrix}
	^{\T}$ is a  standard Wiener process 
	where
	each $w_i(t)$ is a standard Brownian motion and the $\sigma$-algebras $\sigma\left(w_i(t), t\in [t_0,\infty)\right), i=1,\dots,r-1$ are independent \cite{draganMS2013mathematical}.
	Considering the cost functional with respect to the control $u(t)$ with the given initial $x_0$:
\begin{equation}\label{eq:cost-function:scare}
	\alt{\!\!}
	J(t_0,x_0;u) \alt{\!}=\alt{\!} 
	\opE{\alt{\!\!}\int_{t_0}^{\infty}
		\alt{\!\!}
		\begin{bmatrix}
			x_{t_0,x_0;u}(t)\alt{\!\!} \\ u(t)
		\end{bmatrix}^{\alt{\!\!}\T}
		\alt{\!\!}
		\begin{bmatrix}
			Q & \alt{\!\!\!}L \\ L^{\alt{\!}\T} & \alt{\!\!\!}R
		\end{bmatrix}
		\alt{\!\!}
		\begin{bmatrix}
			x_{t_0,x_0;u}(t)\alt{\!\!} \\ u(t)
	\end{bmatrix}
		\alt{\!}
\diff t},\alt{\!}
\end{equation}
where $x_{t_0,x_0;u}(t)$ is the solution of the system \cref{eq:scare-system:scare} corresponding to the control $u(t)$ with the initial $x_{t_0,x_0;u}(t_0)=x_0$, one goal in stochastic control is to minimize the cost functional \cref{eq:cost-function:scare} and derive an optimal control. Such an optimization problem is also known as the first linear-quadratic optimization problem 
{\cite[Section~6.2]{draganMS2013mathematical}}.    

We impose the following assumptions throughout this paper:

\begin{enumerate}[(\mbox{C}1)]
	\item \label{item:item1:scare} $R\succ 0$;
	\item \label{item:item2':scare} 
		the pair
		$(\set{A_i}_{i=0}^{r-1}, \set{B_i}_{i=0}^{r-1})$ is stabilizable, i.e.,
		there exists $F\in \mathbb{R}^{m\times n}$ such that the adjoint of the operator $\ee^{\op L_F(t-t_0)}$ is stable,
		namely $\ee^{\op L_F^*(t-t_0)}$ is stable where
		\begin{equation*}
		\begin{aligned}[b]
			\MoveEqLeft[0]
				\op L_F^*\colon S \mapsto 
				(A_{0}+B_{0}F)^{\T}S+S(A_{0}+B_{0}F)
				\alt{\\&\qquad\qquad\qquad}+\sum_{i=1}^{r-1}(A_i+B_iF)^{\T}S(A_i+B_iF)
				;
		\end{aligned}
		\end{equation*}
	\item \label{item:item3:scare} the pair
		$(\set{A_i}_{i=0}^{{r-1}}, C)$ is detectable 
		with $C^{\T}C\alt{\!}=\alt{\!}Q-LR^{-1}L^{\T}$, or equivalently, $(\set{A_i^{\T}}_{i=0}^{r-1}, \set{C_i^{\T}}_{i=0}^{r-1})$ is stabilizable with $C_0=C,C_i=0$ for $i=1,\dots, r-1$. 
\end{enumerate}
It is known that if the assumptions above hold,
then \cref{eq:scare} has a unique 
positive semi-definite stabilizing solution $X_{\star}$, see, e.g., {\cite[Theorem~5.6.15]{draganMS2013mathematical}}.
Here, $X$ is a stabilizing solution if the system $(A_{0}+B_{0}F_X, A_1+B_1F_X, \dots, A_{r-1}+B_{r-1}F_X)$ is stable  with 
\begin{equation}\label{eq:F:scare}
	F_X\alt{\!}=\alt{\!}-(R\alt{\!}+\alt{\!}\sum_{i=1}^{r-1} B_i^{\T}XB_i)^{\alt{\!}-\alt{\!}1}(L^{\T}\alt{\!}+\alt{\!}B_{0}^{\T}X\alt{\!}+\alt{\!}\sum_{i=1}^{r-1} B_i^{\T}XA_i), 
\end{equation}
or equivalently, $\op L_{F_X}$ is exponentially stable with the associated $F_\star=F_{X_\star}$ taking the feedback control specified in \cref{eq:F:scare} with $X=X_\star$. 
In fact,  $X_{\star}$ is  a stabilizing solution if and only if the zero equilibrium of the closed-loop system 
\begin{equation}\label{eq:close-loop:scare}
\diff x(t) = (A_{0} + B_{0}F_{\star})x(t)\diff t+\sum_{i=1}^{r-1}(A_i+B_iF_{\star})x(t)\diff w_{i}(t)
\end{equation}
is strongly exponentially stable in mean square \cite[Remark~5.4.1]{draganMS2013mathematical}, where $F_\star=F_{X_\star}$ is as in \cref{eq:F:scare} with $X=X_\star$.
Furthermore, the cost functional \cref{eq:cost-function:scare} admits an  optimal control $u(t)=F_{\star}x_{t_0,x_0}(t)$ where $x_{t_0,x_0}(t)$ is the solution to the corresponding closed-loop system \cref{eq:close-loop:scare}. 

Due to the complicated forms, one may realize the SCAREs would be much more difficult to analyze and solve,
unlike the situation for the CAREs that people have developed rich theoretical results and numerical methods,
see, e.g., \cite{mehrmann1991automomous,lancasterR1995algebraic,ionescuOW1999generalized,biniIM2012numerical,huangLL2018structurepreserving,bennerBKS2020numerical}.
There exist only a few references, e.g., \cite{damm2004rational
,draganMS2013mathematical}, discuss the stochastic linear systems and the induced SCAREs theoretically.
The existing methods for solving SCAREs include Newton's method \cite{damm2004rational,dammH2001newtons}, modified Newton's method \cite{guo2001iterative,ivanov2007iterations,chuLLW2011modified}, Newton's method with generalized Lyapunov/Stein equations solver \cite{fanWC2016smith,ivanov2007properties,takahashiKSS2009numerical}, comparison theorem based method \cite{freilingH2003properties,freilingH2004class}, LMI's (linear matrix inequality) method \cite{ramiZ2000linear,iidukaY2012computational}, and homotopy method \cite{zhangFCW2015homotopy}.
However, these approaches can be computationally expensive and may require finding a stabilizing initial approximation, which can be challenging.

In this paper, we focus on a special case that $A_i$ are large-scale and sparse, and $B_i$ and $Q-LR^{-1}L^{\T}$ are low-rank, which implies $C$ is also low-rank.
With the help of the algebraic structures identified in previous work \cite{guoL2023stochastic},
motivated by the relation illustrated in \cite{guoL2023intrinsic} between the algebraic structure and the efficient RADI method \cite{bennerBKS2018radi} for CAREs,
we 
propose an RADI-type method to solve large-scale SCAREs efficiently.  
Unlike many existing methods for large-scale problems such as Newton-type methods and homotopy method, it computes the residual at a lower cost and does not necessitate a stabilizing initial approximation that may be difficult to find.

The rest of the paper is organized as follows.
We first introduce some notation. 
Then we discuss the idea of incorporation (also known as defect correction) performing on SCAREs and present a low-rank expression of the residual generated by a chosen approximation in \cref{sec:incorporation},  
which derives a prototype of an RADI-type method.
In \cref{sec:implementation-aspects} we show the details that converts the immature method into a practical algorithm.
Numerical experiments and some related discussions are given in \cref{sec:experiments-and-discussions}.
Some concluding remarks in \cref{sec:concluding-remarks} end the paper.

\subsection{Notation}\label{ssec:notation}
In this paper,
${\mathbb R}^{n\times m}$ is the set of all $n\times m$ real matrices,
${\mathbb R}^n={\mathbb R}^{n\times 1}$, and ${\mathbb R}={\mathbb R}^1$.
$I_n$ (or simply $I$ if its dimension is clear from the context) is the $n\times n$ identity matrix. 
For a matrix $X$, $X^{\T}$ is its transpose and $X^{-\alt{\!}\T}=(X^{-1})^{\T}$.
For a symmetric matrix $X$,
$X\succ 0$ ($X\succeq 0$) indicates its positive (semi-)definiteness, and $X\prec 0$ ($X\preceq 0$) if $-X\succ 0$ ($-X\succeq 0$).

By $A \otimes B$ denote the Kronecker product of the matrices $A$ and $B$.
For $A\in \mathbb{R}^{m\times n}, B\in \mathbb{R}^{p\times q}$, define 
the left semi-tensor product of $A$ and $B$:
\[
	A\ltimes B:=
\begin{dcases*}
	(A\otimes I_{p/n})B & if $n\mid p$,\\ 
	A(B\otimes I_{n/p}) & if $p\mid n$. 
\end{dcases*}
\]
Clearly $A\ltimes B=AB$ if $n=p$.
The left semi-tensor product, first defined in 2001 \cite{cheng2001semitensor}, has many applications in system and control theory, such as  Boolean networks \cite{chengQ2009controllability} and electrical systems \cite{xueM2008new}.
It is easy to verify that this product satisfies many of the same 
arithmetic laws as the classical matrix product.
Three useful identities are given below:
	\begin{subequations}
	\label{eq:easy}
\begin{align}
	U\ltimes (I+V\ltimes U)&=(I+U\ltimes V)\ltimes U,\quad
	\\
	U\ltimes (I+V\ltimes U)^{-1}&=(I+U\ltimes V)^{-1}\ltimes U
	, 
\end{align}
	\end{subequations}
\begin{equation}
	\label{eq:smwf}
	\begin{aligned}[b]
		\MoveEqLeft[0]M^{-1} - (M + U\ltimes D\ltimes V)^{-1} =
		\alt{\\&\qquad} M^{-1}\ltimes  U\ltimes  (D^{-1} + V\ltimes  M^{-1} \ltimes U)^{-1} \ltimes V \ltimes M^{-1}.
	\end{aligned}
\end{equation}
The inverses 
in \cref{eq:easy,eq:smwf} assume the 
invertibility.

With the aid of the left semi-tensor product,
 following \cite{guoL2023stochastic},
the SCARE \cref{eq:scare} can be reformulated as an equivalent form, which we call the ``standard'' form: 
\begin{subequations}\label{eq:scare-standard-form:scare}
\begin{equation}
	\op C(X) = 0,
\end{equation}	
where 
the residual operator $\op C(\cdot)$ is defined as\footnote{
``$*$'' denotes the underlined term in the same line; this notation will be frequently used later.}
\begin{align}
	\alt{\!\!}
	\op C(X) 
	\alt{\!}:= 
	&\,C^{\T}C\alt{\!}+\alt{\!} A^{\T}X\alt{\!}+\alt{\!}X A 
		\alt{\!}+\alt{\!} \what A^{\T}\ltimes X\ltimes \what A
		\alt{\nonumber\\ &}	\ulub{\alt{\!}-\alt{\!}}{(X B\alt{\!}+\alt{\!}\what A^{\T}\alt{\!}\ltimes X \ltimes \what B)}{(I\alt{\!}+\alt{\!}\what B^{\T}\alt{\!}\ltimes X \ltimes \what B)^{\alt{\!}-\alt{\!}1}(}{*)^{\alt{\!}\T}\alt{\!}}
		,
\end{align}
\end{subequations}
in which
\begin{align*}
	&
	A = A_{{0}}-B_{{0}}R^{-1}L^{\T}\in \mathbb{R}^{n\times n}, \qquad  
	\\&
	B=B_{{0}}P^{-1}\in \mathbb{R}^{n\times m},  \qquad
	C\in \mathbb{R}^{l\times n},
	\\
	& 
	\what A=\Pi\alt{\!}\left(\alt{\!}\begin{bmatrix}
			A_1 \\ \vdots \\ A_{r-1}
			\end{bmatrix}\alt{\!}-\alt{\!}\begin{bmatrix}
			B_1 \\ \vdots \\ B_{r-1}
		\end{bmatrix}\alt{\!} R^{-1}L^{\T}\alt{\!}\right)\alt{\!}\in \mathbb{R}^{{(r-1)}n\times n},\;
		\\&
		\what B=\Pi \alt{\!}\begin{bmatrix}
		B_1 \\ \vdots \\ B_{r-1}
	\end{bmatrix}\alt{\!}P^{-1}\in \mathbb{R}^{{(r-1)}n\times m}.\alt{\!}
\end{align*}
Here, $\Pi$ is the permutation satisfying $\Pi^{\T}(X\otimes I_{{r-1}}) \Pi = I_{{r-1}}\otimes X$, and $P$ is any matrix such that $P^{\T}P=R$.  
The feedback control $F_X$ and the closed-loop matrix are reformulated as 
\begin{equation}\label{eq:F:hat:scare}
	\begin{aligned}[t]
		F_X
		&={-R^{-1}L^{\T}+P^{-1}\what F_X},
		\\
		\begin{bmatrix}
			A_{{0}}+B_{{0}}F_X
			\\
			A_1+B_1F_X
			\\
			\vdots 
			\\
			A_{r-1}+B_{r-1}F_X
		\end{bmatrix}
		&=
		\begin{bmatrix}
			A
			\\
			\Pi^{\T}\what A
		\end{bmatrix}
		+
		\begin{bmatrix}
			B
			\\
			\Pi^{\T}\what B
		\end{bmatrix}{\what F_X}
		,
	\end{aligned}
\end{equation}
where 
$\what F_X=-(I+\what B^{\T}\ltimes X \ltimes \what B)^{-1}(XB + \what A^{\T}\ltimes X \ltimes  \what B)^{\T}$ is the feedback control of the standard form \cref{eq:scare-standard-form:scare}.

In the following, we will primarily focus on the standard-form SCARE \cref{eq:scare-standard-form:scare}, except where indicated otherwise regarding \cref{eq:scare}.


\section{Incorporation and RADI-type method}\label{sec:incorporation}
The concept of incorporation is as follows: Once an approximate solution $X$ is obtained, writing the difference from the exact solution $X_\star$ as $\Delta=X_\star-X$, the original equation $\op C(X_\star)=0$ can then be transformed into a new equation $\op C_{X}(\Delta)=0$, which would retain the same form and can be solved more efficiently. 
This process can be repeated to obtain more and more accurate approximate solutions.
In this section, we will derive an efficient RADI-type method for SCAREs based on the incorporation philosophy, as what has been done for CAREs in \cite{bennerBKS2018radi}. 

\subsection{Residual and incorporation}\label{ssec:residual-equations}

We first demonstrate that the new equation $\op C_{X}(\Delta)=0$ is indeed of the same form, specifically, also an SCARE. 
To prepare, we compute the difference of the residuals of symmetric matrices $X+\Delta$ and $X$:
\begin{align*}
	&\op C(X+\Delta)-\op C(X)
	\\& =A^{\T}\Delta + \Delta A + \what A^{\T} \ltimes \Delta \ltimes \what A
	\\&\qquad\;
	\ulub{+}
	{(X B + \what A^{\T}\ltimes X \ltimes \what B) } {(I + \what B^{\T}\ltimes X \ltimes \what B)^{-1}(*}{)^{\T}}
	\\&\qquad\;
	\ulub{-\alt{\!}}{([X\alt{\!}+\alt{\!}\Delta] B \alt{\!}+\alt{\!} \what A^{\T}\alt{\!}\ltimes [X\alt{\!}+\alt{\!}\Delta] \ltimes \what B)}{\alt{\!}(I\alt{\!}+\alt{\!}\what B^{\T}\alt{\!}\ltimes [X\alt{\!}+\alt{\!}\Delta] \ltimes \what B)^{\alt{\!}-\alt{\!}1}\alt{\!}(*}{)\alt{\!}^{\T}\alt{\!}.}\alt{\!\!}
\end{align*}
Factorize $\what B^{\T}\ltimes X \ltimes \what B+ I=:R_X=P_X^{\T}P_X$ and write 
\begin{equation}\label{eq:AB-Xi}
	\begin{gathered}[c]
	B_X=BP^{-1}_X,\what B_X=\what BP^{-1}_X,
	\\
		A_X=A-B_XL^{\T}_X, \what A_X=\what A - \what B_XL^{\T}_X,
	\end{gathered}
\end{equation}
where $L^{\T}_X=B_X^{\T}X + \what B_X^{\T}\ltimes X \ltimes \what A$.
Then by using \cref{eq:AB-Xi} and performing some calculations 
we obtain
	\begin{stripalign}
\begin{align}
		\nonumber
		\MoveEqLeft[0]\op C(X+\Delta)-\op C(X)
		\\\nonumber&
		\ulub{= A^{\T}\alt{\!}\Delta \alt{\!}+\alt{\!} \Delta A \alt{\!}+\alt{\!} \what A^{\T}\alt{\!} \ltimes \Delta \ltimes \what A \alt{\!}+\alt{\!}}
		{(X B \alt{\!}+\alt{\!} \what A^{\T}\alt{\!}\ltimes X \ltimes \what B)}{R_X^{\alt{\!}-\alt{\!}1}(*}{)^{\alt{\!}\T}\alt{\!}}
		\nonumber\\\nonumber&\qquad
		\ulub{\alt{\!}-\alt{\!}}{([X\alt{\!}+\alt{\!}\Delta] B \alt{\!}+\alt{\!} \what A^{\T}\alt{\!}\ltimes [X\alt{\!}+\alt{\!}\Delta] \ltimes \what B)}{(R_X \alt{\!}+\alt{\!} \what B^{\T}\alt{\!}\ltimes \Delta \ltimes \what B)^{\alt{\!}-\alt{\!}1}(*}{)^{\alt{\!}\T}}
		\nonumber\\&=
		A^{\T}\alt{\!}\Delta \alt{\!}+\alt{\!} \Delta A \alt{\!}+\alt{\!} \what A^{\T}\alt{\!} \ltimes \Delta \ltimes \what A \alt{\!}+\alt{\!} L_XL^{\T}_X
		\nonumber\\\nonumber&\qquad
		\ulub{\alt{\!}-\alt{\!}}{(L_X\alt{\!}+\alt{\!}\Delta B_X \alt{\!}+\alt{\!} \what A^{\T}\alt{\!}\ltimes \Delta \ltimes \what B_X)}
		{(I\alt{\!}+\alt{\!} \what B_X^{\T}\ltimes \Delta \ltimes \what B_X)^{\alt{\!}-\alt{\!}1}(*}{)^{\T}}
		\nonumber \\&=
		A^{\T}\alt{\!}\Delta \alt{\!}+\alt{\!} \Delta A \alt{\!}+\alt{\!} \what A^{\T}\alt{\!} \ltimes \Delta \ltimes \what A \alt{\!}+\alt{\!} L_XL^{\T}_X
		\nonumber\\\nonumber&\qquad
		\ulub{\alt{\!}-\alt{\!}}{(L_X[I\alt{\!}+\alt{\!}\what B_X^{\T}\ltimes \Delta \ltimes \what B_X] \alt{\!}+\alt{\!}\Delta B_X \alt{\!}+\alt{\!} \what A_X^{\T}\ltimes \Delta \ltimes \what B_X)}
		{(I\alt{\!}+\alt{\!}\what B_X^{\T}\ltimes \Delta \ltimes \what B_X)^{\alt{\!}-\alt{\!}1} (*}{)^{\T}}
		\nonumber\\&=
		A^{\T}\alt{\!}\Delta \alt{\!}+\alt{\!} \Delta A \alt{\!}+\alt{\!} \what A^{\T}\alt{\!} \ltimes \Delta \ltimes \what A \alt{\!}+\alt{\!} L_XL^{\T}_X
		\nonumber\\\nonumber&\qquad
		\begin{multlined}[t]
			\ulub{\alt{\!}-\alt{\!}}{(\Delta B_X \alt{\!}+\alt{\!} \what A_X^{\T}\ltimes \Delta \ltimes \what B_X) L^{\T}_X}{\alt{\!}-\alt{\!}(*)}{^{\T}
			\alt{\!}-\alt{\!}L_X[I\alt{\!}+\alt{\!}\what B_X^{\T}\ltimes \Delta \ltimes \what B_X] L^{\T}_X}
			\\
			\ulub{\alt{\!}-\alt{\!}}{(\Delta B_X \alt{\!}+\alt{\!} \what A_X^{\T}\ltimes \Delta \ltimes \what B_X)}
			{(I\alt{\!}+\alt{\!}\what B_X^{\T}\ltimes \Delta \ltimes \what B_X)^{\alt{\!}-\alt{\!}1}(*}{)^{\T}}
		\end{multlined}
		\nonumber\\&=
		A_X^{\T}\alt{\!}\Delta \alt{\!}+\alt{\!} \Delta A_X \alt{\!}+\alt{\!} \what A_X^{\T} \ltimes \Delta \ltimes \what A_X
		\nonumber\\&\nonumber\qquad
		\ulub{\alt{\!}-\alt{\!}}{(\Delta B_X \alt{\!}+\alt{\!} \what A_X^{\T}\ltimes \Delta \ltimes \what B_X)}
		{(I\alt{\!}+\alt{\!}\what B_X^{\T}\ltimes \Delta \ltimes \what B_X)^{\alt{\!}-\alt{\!}1}(*}{)^{\T}}{}
		\nonumber\\&= 
		\op C_X(\Delta)-\op C_X(0),
	\label{eq:residual-eq}
\end{align}
	\end{stripalign}
where $\op C_X(0)$ can be arbitrarily chosen and
\[
	\op C_X(\Delta)=
	\begin{aligned}[t]
		&\op C_X(0)+
		A_X^{\T}\Delta + \Delta A_X + \what A_X^{\T} \ltimes \Delta \ltimes \what A_X
		\\&\qquad \alt{\!\!}\ulub{\alt{\!}-\alt{\!}}{(\Delta B_X \alt{\!}+\alt{\!} \what A_X^{\T}\ltimes \Delta \ltimes \what B_X) }
		{(I\alt{\!}+\alt{\!}\what B_X^{\T}\ltimes \Delta \ltimes \what B_X)^{\alt{\!}-\alt{\!}1}(*}{)^{\alt{\!}\T}\alt{\!}.}
	\end{aligned}
\]
Let the symmetric matrices $X_\star$ and $X$ be 
an exact solution and an approximate one to \cref{eq:scare-standard-form:scare} respectively, and
write $\Delta\alt{\!}=\alt{\!}X_\star\alt{\!}-\alt{\!} {X}$.
Clearly, $\op C(X_\star)\alt{\!}=\alt{\!}0$.
Then we can construct another SCARE to which $\Delta$ is a solution, as 
is stated in \cref{thm:incor-scare}.

\begin{theorem}\label{thm:incor-scare}
	Given $X\succeq 0$ and let $A_{X},B_{X},\what A_{X},\what B_{X}$ be as in \cref{eq:AB-Xi},
	and let $\op C_{X}(0)=\op C(X)  \succeq 0$.
	Then
	\begin{enumerate}
		\item \label{thm:incor-scare:solution}
			$\op C_{X}(\Delta)=\op C(X+\Delta)$. Moreover,
			$\Delta=X_\star-X$ is a solution to $\op C_{X}(\Delta)=0$, if and only if $\op C(X_\star)=0$.
		\item \label{thm:incor-scare:stabilizing-2}  $\Delta_\star=X_\star-X$ is a stabilizing solution to $\op C_{X}(\Delta)=0$, if and only if $X_\star$ is one to $\op C(X)=0$. 
		\item \label{thm:incor-scare:stabilizing-3} 
			There exists a unique stabilizing solution $\Delta_\star$ to the SCARE $\op C_{X}(\Delta)=0$.
	\end{enumerate}
\end{theorem}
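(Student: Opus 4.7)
The plan is to address the three parts in sequence, using the identity (eq:residual-eq) that was just derived as the workhorse. For part~\ref{thm:incor-scare:solution}, that calculation already establishes
\[
\op C(X+\Delta) - \op C(X) = \op C_X(\Delta) - \op C_X(0)
\]
for every symmetric $\Delta$, so inserting the hypothesis $\op C_X(0) = \op C(X)$ immediately gives $\op C_X(\Delta) = \op C(X+\Delta)$, and specializing $\Delta = X_\star - X$ yields the ``if and only if'' assertion.

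For part~\ref{thm:incor-scare:stabilizing-2}, I would perform an algebraic bookkeeping check relating the feedback gains and closed-loop operators of the two SCAREs. Let $\what F_{X_\star}$ denote the feedback of the original standard-form SCARE at $X_\star$ and $\what F^X_{\Delta_\star}$ that of the new SCARE at $\Delta_\star = X_\star - X$. Using $R_{X_\star} = P_{X_\star}^\T P_{X_\star}$ together with (eq:AB-Xi), I would first rewrite
\[
I + \what B_X^\T \ltimes \Delta_\star \ltimes \what B_X = P_X^{-\T} R_{X_\star} P_X^{-1},
\]
and then analogously expand $\Delta_\star B_X + \what A_X^\T \ltimes \Delta_\star \ltimes \what B_X$ in terms of the original data at $X_\star$. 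The target identity is $\what F^X_{\Delta_\star} = L_X^\T + P_X \what F_{X_\star}$, from which
\[
A_X + B_X \what F^X_{\Delta_\star} = A + B \what F_{X_\star}, \qquad \what A_X + \what B_X \what F^X_{\Delta_\star} = \what A + \what B \what F_{X_\star},
\]
so the closed-loop systems of the two SCAREs at their respective candidate stabilizing solutions coincide. The ``iff'' then follows since the stability operator $\op L_F^*$ of (C2) depends only on these closed-loop matrices.

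Part~\ref{thm:incor-scare:stabilizing-3} should then follow without further work. Existence of a unique positive semi-definite stabilizing $X_\star$ for the original SCARE is supplied by assumptions (C1)--(C3) and the cited theorem from \cite{draganMS2013mathematical}: setting $\Delta_\star := X_\star - X$ and applying parts~\ref{thm:incor-scare:solution} and~\ref{thm:incor-scare:stabilizing-2} yields a stabilizing solution of $\op C_X(\Delta) = 0$, while any stabilizing solution $\Delta$ of the new SCARE gives, via the same two parts, a stabilizing $X+\Delta$ for the original SCARE, forcing $X+\Delta = X_\star$ by uniqueness.

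The main obstacle will be the feedback-matching identity in part~\ref{thm:incor-scare:stabilizing-2}. Although routine in character, the semi-tensor factors and the cascade of substitutions into $A_X, \what A_X, B_X, \what B_X$ require careful manipulation, and one must also verify that ``stabilizing'' in the sense of (C2) applied to the reduced data coincides with the notion for the original problem once the closed-loop matrices are identified; parts \ref{thm:incor-scare:solution} and \ref{thm:incor-scare:stabilizing-3} are essentially corollaries once this identification is made.
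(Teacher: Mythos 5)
Your proposal is correct and follows essentially the same route as the paper: part 1 from the identity \cref{eq:residual-eq}, part 2 by setting the candidate feedback to $L_X^{\T}+P_X\what F_{X_\star}$ and checking that the closed-loop matrices of the two SCAREs coincide (so the stability operators $\op L^*$ agree), and part 3 from part 2 plus uniqueness of the stabilizing solution of the original SCARE. The intermediate identity $I+\what B_X^{\T}\ltimes\Delta_\star\ltimes\what B_X=P_X^{-\T}R_{X_\star}P_X^{-1}$ you propose is a valid way to carry out the "straightforward calculations" the paper leaves implicit.
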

\begin{proof}
	\Cref{thm:incor-scare:solution} follows directly from   \cref{eq:residual-eq}.


	For \cref{thm:incor-scare:stabilizing-2}, 
		according to the discussion in \cref{sec:introduction}, $X_\star$ is a stabilizing solution if and only if the adjoint $\op L^*_{F_\star}$ of the original associated linear differential equation is exponentially stable for
		$F_\star=-(I+\what B^{\T}\ltimes X_\star \ltimes \what B)^{-1}(X_\star B + \what A^{\T}\ltimes X_\star \ltimes  \what B)^{\T}$ (in the form of \cref{eq:F:hat:scare}
		).
	Let $F_{X}=L^{\T}_{X}+P_{X}F_\star$, and then
		\begin{align*}
			A_{X}\alt{\!}+\alt{\!}B_{X}F_{X}
			&=A\alt{\!}-\alt{\!}B_{X}L^{\T}_{X}\alt{\!}+\alt{\!}B_{X}(L^{\T}_{X}\alt{\!}+\alt{\!}P_{X}F_\star)
			=A\alt{\!}+\alt{\!}BF_\star,
			\\
			\what A_{X}\alt{\!}+\alt{\!}\what B_{X} F_{X}
			&=\what A\alt{\!}-\alt{\!}\what B_{X}L^{\T}_{X}\alt{\!}+\alt{\!}\what B_{X}(L^{\T}_{X}\alt{\!}+\alt{\!}P_{X}F_\star)
			=\what A\alt{\!}+\alt{\!}\what BF_\star.
		\end{align*}
	This implies that
	\begin{align*}
			\op L^*_{F_{X}}\alt{\!}(S)
	&
			\alt{\!}:=\alt{\!}\ubul{(*}{
			)^{\alt{\!}\T}\alt{\!}S\alt{\!}+\alt{\!}S}{(A_{{X}}\alt{\!\!}+\alt{\!\!}B_{{X}}F_{X})}{}
		\ubul{\alt{\!}+\alt{\!}(*}{
		)^{\alt{\!}\T}\alt{\!}\ltimes S \ltimes }{(\what A_{X} \alt{\!\!}+\alt{\!\!} \what B_{X} F_{X})}{}
		\alt{\\&}
		\alt{\!}=\alt{\!}\op L^*_{F_\star}\alt{\!}(S),
	\end{align*}
	indicating that  $\op L^*_{F_{X}}$ is exponentially stable.
	Moreover,
	straightforward calculations show that
	\[
		F_{X}
		=-
		(I+\what B_{X}^{\T}\ltimes \Delta_\star \ltimes \what B_{X})^{-1}
		( B_{X}^{\T}\Delta_\star+\what B_{X}^{\T}\ltimes \Delta_\star \ltimes \what A_{X})
	\]
	is the corresponding feedback control associated with the solution $\Delta_\star$ to the SCARE $\op C_{X}(\Delta)=0$.
	Hence $\Delta_\star$ is stabilizing if and only if $X_\star$ is stabilizing.

	\Cref{thm:incor-scare:stabilizing-3} follows from \cref{thm:incor-scare:stabilizing-2} and the fact that $X_\star$ is the unique stabilizing solution to the SCARE $\op C(X)=0$.
\end{proof}
\subsection{Low-rank expression of a special residual}\label{ssec:low-rank-expression-of-a-special-residual}

To design an RADI-type method, a critical challenge is to identify an effective initial approximation that is expected to be low-rank and to yield a low-rank residual.
Inspired by 
\cite[Theorem~3.2]{guoL2023stochastic}
and the convergence properties of fixed point iterations \cite[Theorem 2.1]{guoL2023stochastic}, we can compute an approximation for the unique stabilizing solution $X_\star$ by pursuing the following process:
\[
	X_{t+1}
	=E_{\gamma}^{\T} \ltimes X_{t} \ltimes 
	(I +G_{\gamma} \ltimes X_{t} )^{-1} \ltimes  E_{\gamma}+H_{\gamma}, \qquad X_0=0,
\]
where $E_\gamma\in \mathbb{R}^{rn\times n}, G_\gamma \in \mathbb{R}^{rn\times rn}$ are determined by $A,B,C,\gamma$ (their detailed forms are omitted as they are not needed later) and
\begin{equation}
	\label{eq:H:scare:real}
	H_\gamma =
	2\gamma A_\gamma^{-\alt{\!}\T}C^{\T}(I_l+Y_\gamma Y_\gamma^{\T})^{-1}CA_\gamma^{-1}\succeq 0 \in \mathbb{R}^{n\times n},
\end{equation}
for proper $\gamma>0$.	Here, $A_\gamma=A-\gamma I, Y_\gamma = CA_\gamma^{-1}B$. 

We aim to use 
$X= H_\gamma
$ as an approximate solution, where $
H_\gamma$ is as in \cref{
eq:H:scare:real}, and construct its associated new residual operator $\op C_{X}$ in terms of $\op C_{X}(0)=\op C(X)$. 
\begin{theorem}\label{thm:residual-positive-semi-definite}
	Let $A_\gamma=A-\gamma I,Y_\gamma = CA_\gamma^{-1}B,\what Y_\gamma = \sqrt{2\gamma}CA_\gamma^{-1}\ltimes\what B$.
For $X=2\gamma A_{\gamma}^{-\alt{\!}\T}C^{\T}(I+Y_{\gamma}Y_{\gamma}^{\T})^{-1}CA_{\gamma}^{-1}$, we have
	\begin{equation}\label{eq:residual-total}
		\begin{gathered}[t]
			\op C(X) = \wtd C^{\T}\wtd C,
			\\ 
			\wtd C= 
			\begin{bmatrix}
				C+{2\gamma}(I+Y_{\gamma}Y_{\gamma}^{\T})^{-1}CA_{\gamma}^{-1}
				\\
				\wtd M^{-1}\left(\sqrt{2\gamma}CA_{\gamma}^{-1}\ltimes \what A - \what Y_\gamma B^{\T}X\right)
			\end{bmatrix}\in \mathbb{R}^{rl\times n}
			,
		\end{gathered}
		\end{equation}
		where
			$\wtd M\wtd M^{\T}=I+Y_{\gamma}Y_{\gamma}^{\T}\otimes I_{r-1}+\what Y_\gamma\what Y_\gamma^{\T}$.
			Note that $l$ is the number of rows of $C$.
	Moreover, 
	\[
		\begin{aligned}[t]
		L^{\T}_{X}=(P^{\alt{\!}-\alt{\!}\T}_{X})\what Y_{\gamma}^{\T}\ltimes (I\alt{\!}+\alt{\!}Y_{\gamma}Y_{\gamma}^{\T})^{\alt{\!}-\alt{\!}1}\alt{\!}\ltimes \alt{\!}\left(\alt{\!}\sqrt{2\gamma}CA_{\gamma}^{\alt{\!}-\alt{\!}1}\ltimes \what A \alt{\!}-\alt{\!} \what Y_{\gamma} B^{\T}X\alt{\!}\right)\alt{\!}
		\alt{\\}
		+P_{X} B^{\T}X.
		\end{aligned}
	\]
\end{theorem}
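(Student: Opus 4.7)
The plan is to substitute $X=2\gamma A_\gamma^{-\T}C^{\T}WCA_\gamma^{-1}$ (writing $W:=(I+Y_\gamma Y_\gamma^{\T})^{-1}$) into $\op C(X)$ and verify the two identities by direct calculation. The first reduction I would carry out is to convert each triple left semi-tensor product into a standard Kronecker form by factoring $X\otimes I_{r-1}=2\gamma(A_\gamma^{-\T}C^{\T}\otimes I_{r-1})(W\otimes I_{r-1})(CA_\gamma^{-1}\otimes I_{r-1})$. Writing $Q_W:=W\otimes I_{r-1}$ and $\wtd F:=\sqrt{2\gamma}\,CA_\gamma^{-1}\ltimes\what A$, this yields the clean identities $\what A^{\T}\ltimes X\ltimes\what A=\wtd F^{\T}Q_W\wtd F$, $\what A^{\T}\ltimes X\ltimes\what B=\wtd F^{\T}Q_W\what Y_\gamma$, $\what B^{\T}\ltimes X\ltimes\what B=\what Y_\gamma^{\T}Q_W\what Y_\gamma$, and, crucially, $\wtd M\wtd M^{\T}=Q_W^{-1}+\what Y_\gamma\what Y_\gamma^{\T}$, since $(I+Y_\gamma Y_\gamma^{\T})\otimes I_{r-1}=Q_W^{-1}$.

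The second step is to deploy two Sherman--Morrison--Woodbury identities. With $S:=I+\what Y_\gamma^{\T}Q_W\what Y_\gamma=R_X$ and $M:=\wtd M\wtd M^{\T}$, I would use
\[
S^{-1}=I-\what Y_\gamma^{\T}M^{-1}\what Y_\gamma,\qquad M^{-1}=Q_W-Q_W\what Y_\gamma S^{-1}\what Y_\gamma^{\T}Q_W,
\]
together with the consequences $\what Y_\gamma^{\T}M^{-1}=S^{-1}\what Y_\gamma^{\T}Q_W$ and $\what Y_\gamma^{\T}M^{-1}\what Y_\gamma=I-S^{-1}$. The pure-CARE portion is then handled separately: using $A=A_\gamma+\gamma I$ together with $A_\gamma^{-1}A_\gamma=I$ and the identity $WY_\gamma Y_\gamma^{\T}=I-W$ (from $W(I+Y_\gamma Y_\gamma^{\T})=I$), a direct computation shows
\[
C^{\T}C+A^{\T}X+XA-XBB^{\T}X=(C+2\gamma WCA_\gamma^{-1})^{\T}(C+2\gamma WCA_\gamma^{-1})=\wtd C_1^{\T}\wtd C_1,
\]
where $\wtd C_1$ is the first block of $\wtd C$. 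This is essentially the standard CARE-RADI identity adapted to the present notation.

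The third step completes the calculation. Setting $G:=B^{\T}X$ and $K:=XB+\what A^{\T}\ltimes X\ltimes\what B=G^{\T}+\wtd F^{\T}Q_W\what Y_\gamma$, and using $S^{-1}=I-\what Y_\gamma^{\T}M^{-1}\what Y_\gamma$, the full residual becomes $\op C(X)=\wtd C_1^{\T}\wtd C_1+G^{\T}G+\wtd F^{\T}Q_W\wtd F-KK^{\T}+K\what Y_\gamma^{\T}M^{-1}\what Y_\gamma K^{\T}$. Expanding $KK^{\T}$, the $G^{\T}G=XBB^{\T}X$ contributions cancel, and the remaining cross terms regroup, via the two Woodbury formulas above, into exactly $(\wtd F-\what Y_\gamma G)^{\T}M^{-1}(\wtd F-\what Y_\gamma G)=\wtd C_2^{\T}\wtd C_2$. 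Combining yields $\op C(X)=\wtd C^{\T}\wtd C$.

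For the formula for $L_X^{\T}$, the definition together with $B_X=BP_X^{-1}$ and $\what B_X=\what B P_X^{-1}$ directly gives $L_X^{\T}=P_X^{-\T}(G+\what Y_\gamma^{\T}Q_W\wtd F)$. To match the stated expression, I would expand $\what Y_\gamma^{\T}\ltimes W\ltimes(\wtd F-\what Y_\gamma G)=\what Y_\gamma^{\T}Q_W\wtd F-(S-I)G$, multiply by $P_X^{-\T}$, and use $P_X^{-\T}(S-I)=P_X-P_X^{-\T}$ (from $P_X^{\T}P_X=S$); the $P_X G$ contribution then cancels the added $P_X B^{\T}X=P_X G$, leaving $P_X^{-\T}(G+\what Y_\gamma^{\T}Q_W\wtd F)$ as required. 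The main obstacle throughout is the disciplined bookkeeping of the left semi-tensor products---one must consistently factor $X\otimes I_{r-1}$ and take care not to conflate $W\otimes I_{r-1}$ with $I_{r-1}\otimes W$---but once those identifications are fixed, the derivation reduces to two Woodbury manipulations stacked on top of the known CARE-RADI identity.
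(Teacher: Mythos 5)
Your proposal is correct and follows essentially the same route as the paper: both split $\op C(X)$ into the CARE-like part and the stochastic part, reduce the latter to Kronecker/left-semi-tensor form, and apply the two Sherman--Morrison--Woodbury identities (the paper's \cref{eq:MM1}, derived from \cref{eq:easy,eq:smwf}) so that the $XBB^{\T}X$ contributions cancel and the remainder factors as $\wtd C_1^{\T}\wtd C_1+\wtd C_2^{\T}\wtd C_2$. Your explicit use of $Q_W=W\otimes I_{r-1}$ and your verification of the $L_X^{\T}$ formula via $P_X^{-\T}(S-I)=P_X-P_X^{-\T}$ are just notational unpackings of the paper's computation.
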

\begin{proof}
	Straightforward calculations yield $L^{\T}_{X}$, so we will focus on $\op C(X)$.
	By definition,  we express $
	\op C(X) = \text{(I)}+\text{(II)}$, where 
\begin{align*}
	\text{(I)}&=
	C^{\T}C + A ^{\T} X+ X A,
	\\
	\text{(II)}&=
			 \ulub{\what A ^{\T}\alt{\!}\ltimes X\ltimes \what A 
			 \alt{\!}-\alt{\!}}{(\what A ^{\T}\alt{\!}\ltimes X \ltimes \what B \alt{\!}+\alt{\!}X B )}
			 {(\what B ^{\T}\alt{\!}\ltimes X \ltimes \what B \alt{\!}+\alt{\!}I )^{\alt{\!}-\alt{\!}1}\alt{\!}(*}{)^{\T}\alt{\!\!}}.\alt{\!\!\!\!}
\end{align*}
By $ A =A_{{\gamma}}+{\gamma} I$ and writing $
C_{\gamma}=\sqrt{2\gamma}CA_\gamma^{-1}$ or equivalently $C=\frac{1}{\sqrt{2\gamma}}C_{\gamma}A_{{\gamma}}$, 
	\begin{align}
		\text{(I)}&=
		\frac{1}{2\gamma}(A_{{\gamma}}^{\T}C_{\gamma}^{\T})(C_{\gamma}A_{{\gamma}})
		\ulub{+}
		{(A_{{\gamma}}^{\T}+\gamma I)C_{\gamma}^{\T}(I+Y_{\gamma}Y_{\gamma}^{\T})^{-1}C_{\gamma}}
		{+(*}{)^{\T}}
		\nonumber\\ &=
		\ubul{\left(\vphantom{\frac{C_{\gamma}A_{{\gamma}}}{\sqrt{2\gamma}}}*\right.\alt{\!\!}}{\left.\vphantom{\frac{C_{\gamma}A_{{\gamma}}}{\sqrt{2\gamma}}}\right)^{\T}}{
		\left(\frac{C_{\gamma}A_{{\gamma}}}{\sqrt{2\gamma}}+\sqrt{2\gamma}(I+Y_{\gamma}Y_{\gamma}^{\T})^{-1}C_{\gamma}\right)}{}
		-2\gamma C_{\gamma}^{\T}(I+Y_{\gamma}Y_{\gamma}^{\T})^{-2}C_{\gamma}+2\gamma C_{\gamma}^{\T}(I+Y_{\gamma}Y_{\gamma}^{\T})^{-1}C_{\gamma}
		\nonumber\\ &=
		\ubul{(*}{)^{\T}}{
		({C}+\sqrt{2\gamma}(I+Y_{\gamma}Y_{\gamma}^{\T})^{-1}C_{\gamma})}{
		+ XBB^{\T}X.}
	\label{eq:tmp1}
	\end{align}
As in \cref{eq:AB-Xi} we write $R_{X}=I+\what B^{\T}\ltimes X\ltimes \what B=I+\what Y_\gamma^{\T}\ltimes(I+Y_{\gamma}Y_{\gamma}^{\T})^{-1}\ltimes\what Y_\gamma$
for $C_{\gamma}\ltimes \what B=\sqrt{2\gamma}CA_\gamma^{-1}\ltimes\what B=\what Y_\gamma$.
Since $\wtd M\wtd M^{\T}=I+Y_{\gamma}Y_{\gamma}^{\T}\otimes I_{r-1}+\what Y_\gamma\what Y_\gamma^{\T}$, and
\begin{subequations}\label{eq:MM1}
	\begin{gather}
	\label{eq:M1}
	R^{-1}_{X}\what Y_\gamma^{\T}\ltimes (I+Y_{\gamma}Y_{\gamma}^{\T})^{-1}
	\clue{\cref{eq:easy}}{=} \what Y_\gamma^{\T}
		(\wtd M\wtd M^{\T})^{-1}
	,
	\\
	\label{eq:M}
	\begin{multlined}[b]
(I+Y_{\gamma}Y_{\gamma}^{\T})^{-1}\otimes I_{r-1}
\alt{\\}
\ulub{-}{(I+Y_{\gamma}Y_{\gamma}^{\T})^{-1}\ltimes \what Y_\gamma} R^{-1}_{X}{(*}{)^{\T}}
	\end{multlined}
		\clue{\cref{eq:smwf}}{=}
		(\wtd M\wtd M^{\T})^{-1}
			,
	\end{gather}
\end{subequations}
we have
	\begin{align}
	\text{(II)}
	&=
	(\what A ^{\T}\ltimes C_{\gamma}^{\T})\ltimes (I+Y_{\gamma}Y_{\gamma}^{\T})^{-1}\ltimes (C_{\gamma}\ltimes \what A )
	\nonumber\\ &\ulub{\qquad-}{\left((\what A ^{\T}\ltimes C_{\gamma}^{\T})\ltimes (I\alt{\!}+\alt{\!}Y_{\gamma}Y_{\gamma}^{\T})^{\alt{\!}-\alt{\!}1}\ltimes \what Y_\gamma\alt{\!}+\alt{\!}XB\right)}
	{R^{-1}_{X}(*}{)^{\T}}
	\nonumber\\
	&=
	(C_{\gamma}\ltimes \what A )^{\T}\ltimes (I+Y_{\gamma}Y_{\gamma}^{\T})^{-1}\ltimes (C_{\gamma}\ltimes \what A )
	\nonumber\\
	&\ubul{\qquad-(*}{)^{\T}\ltimes \what Y_\gamma R^{-1}_{X} \what Y_\gamma^{\T} \ltimes }{(I+Y_{\gamma}Y_{\gamma}^{\T})^{-1}\ltimes (C_{\gamma}\ltimes \what A )}{}
	\nonumber\\
	&\ulub{\qquad-}{XBR^{-1}_{X}\what Y_\gamma^{\T}\ltimes (I+Y_{\gamma}Y_{\gamma}^{\T})^{-1}\ltimes (C_{\gamma}\ltimes \what A )}{-(*)}{^{\T}
	\nonumber\\& \qquad-XBR^{-1}_{X}B^{\T}X}
	\nonumber\\
	&
	\ubul{
\clue{\cref{eq:MM1}}{=}
		(*}{)^{\alt{\!}\T}\alt{\!}(\wtd M\wtd M^{\T})^{\alt{\!}-\alt{\!}1}}{(C_{\gamma}\ltimes \what A \alt{\!}-\alt{\!}\what Y_\gamma B^{\T}\alt{\!}X)}{
		\alt{\!}-\alt{\!}X B B^{\T}\alt{\!}X.}
\label{eq:tmp2}
	\end{align}
Combining \cref{eq:tmp1,eq:tmp2} gives
\cref{eq:residual-total}.
\end{proof}

Clearly, in the case $r=1$, 
which corresponds to the classical CARE,
the residual \cref{eq:residual-total} reduces 
to the forms presented in \cite[Proposition 1]{bennerBKS2018radi} or \cite[Theorem 11]{guoL2023intrinsic}.  
Based on these forms and utilizing different choices of $\gamma$, we can derive the efficient RADI method for CAREs as outlined \cite{bennerBKS2018radi}. 
Using the same methodology, we are able to develop an RADI-type method for SCAREs, which is detailed in \cref{alg:an-radi-type-method-for-scares}.
\begin{algorithm}[ht]
	\caption{An RADI-type method for the SCARE \cref{eq:scare-standard-form:scare} (prototypical)}\label{alg:an-radi-type-method-for-scares}
	\begin{algorithmic}[1]
		\REQUIRE $A,B,C,\what A,\what B$ and a strategy to generate $\gamma$.
		\STATE $\Xi\leftarrow [\;]$ (empty matrix).
		\LOOP
		\STATE Generate a proper $\gamma\in \mathbb{R}$.
		\STATE $C_\gamma=\sqrt{2\gamma}C(A-\gamma I)^{-1}, Y=\frac{1}{\sqrt{2\gamma}}C_\gamma B, \what Y=C_\gamma\ltimes \what B$.
		\STATE (Cholesky factorization) $NN^{\T}=I+Y Y^{\T}$.
		\STATE $\Xi\leftarrow\begin{bmatrix}
			\Xi & (N^{-1}C_\gamma)^{\T}
	\end{bmatrix}$.
		\STATE (Cholesky f.) $MM^{\T}=(I+Y Y^{\T})\otimes I_{r-1}+\what Y \what Y ^{\T}$.
		\STATE $C_M=C_{\gamma}\ltimes \what A -\sqrt{2\gamma}\what YY^{\T}N^{-\alt{\!}\T}N^{-1}C_\gamma$.
		\STATE $C\leftarrow
		\begin{bmatrix}
			C+\sqrt{2\gamma}N^{-\alt{\!}\T}N^{-1}C_\gamma
			\\
			M^{-1}C_M
		\end{bmatrix}$.
		\IF{$C$ satisfies some stopping criterion}
		\RETURN
		\ENDIF
		\STATE (Cholesky f.) $K^{\T}K=I+(N^{-1}\ltimes \what Y)^{\T}(N^{-1}\ltimes \what Y)$.
		\STATE $
		L^{\T}\alt{\!\!}=\alt{\!}K^{\alt{\!}-\alt{\!\!}\T}\alt{\!}(N^{\alt{\!}-\alt{\!}1}\alt{\!}\ltimes \what Y)^{\alt{\!}\T}\alt{\!}\ltimes N^{\alt{\!}-\alt{\!}1}\alt{\!}\ltimes C_M
		\alt{\!}+\alt{\!}\sqrt{2\gamma}KY^{\T}\alt{\!}N^{\alt{\!}-\alt{\!\!}\T}\alt{\!}N^{\alt{\!}-\alt{\!}1}\alt{\!}C_\gamma.\alt{\!}
	$
		\STATE $B\leftarrow BK^{-1},\what B\leftarrow \what BK^{-1}$.
		\STATE $A\leftarrow A-BL^{\T},\what A\leftarrow \what A-\what BL^{\T}$.
		\ENDLOOP
		\ENSURE $\Xi$ that satisfies $X_\star\approx \Xi\Xi^{\T}$.
	\end{algorithmic}
\end{algorithm}

\section{Implementation aspects}\label{sec:implementation-aspects}
In this section, we discuss several techniques aimed at reducing both storage and computational costs. 
Note that $l\ll n,m\ll n, r\ll n$ and both $A,\what A$ are sparse.

\subsection{Storage and compression}\label{ssec:storage-reduction}

Upon careful examination of the sizes of matrices involved in \cref{alg:an-radi-type-method-for-scares}, 
we present the following table, where
for instance $\what A$ locating in the $(1,1)$ block implies $\what A\in \mathbb{R}^{(r-1)n\times n}$, and any term changing its meaning in one iteration is distinguished with the subscript ``$\cdot_{\text{new}}$'':
\[
\begin{array}{c|c|c|c|c}
                  & n                            & m                            & (r-1)l & l \\ 
(r-1)n            & \what A,\what A_{\text{new}} & \what B,\what B_{\text{new}} &        &   \\
n                 & A,
A_{\text{new}}    & B,B_{\text{new}}             &        &   \\
m                 & L^{\T},F                          & 
K                          &        &   \\
rl=l_{\text{new}} & C_{\text{new}}               &                              &        &   \\
(r-1)l            &                              & \what Y
& M      &   \\
l                 & C,C_\gamma
& Y
&        & 
N \\
\end{array}
\]

To clarify the terms that appear during the iterations in \cref{alg:an-radi-type-method-for-scares},
for any term $Z$ in the $k$-th iteration for $k=0,1,2,\dots$, we denote its first appearance by $Z_{(k)}$, and $Z_{(k+1)}$ if it is updated in the iteration (luckily each term is updated at most once in each iteration).
Then $l_{(k)}=r^kl$.

The primary storage cost is concentrated in $A_{(k)},\what A_{(k)}$ having $rn^2$ elements in total, and $C_{(k)}$ having $l_{(k)}n=r^kln$ elements that 
increases exponentially.

For the former, note that 
\[
		A_{(k+1)}\alt{\!}=\alt{\!}A_{(0)}\alt{\!}-\alt{\!}B_{(1)}L^{\T}_{(0)}\alt{\!}-\alt{\!}\dots\alt{\!}-\alt{\!}B_{(k+1)}L^{\T}_{(k)}
		\alt{\!}=\alt{\!}A_{(0)}\alt{\!}+\alt{\!}B_{(0)}F_{(k)},
\]
where
$F_{(k)}\alt{\!}=\alt{\!}-\alt{\!}\left(K_{(0)}^{-1}L^{\T}_{(0)}\alt{\!}+\alt{\!}\dotsb\alt{\!}+\alt{\!}(K_{(0)}^{-1}\dotsm K_{(k)}^{-1})L^{\T}_{(k)}\right)$.
Thus, instead of updating $A_{(k)}$, we can focus solely on updating $F_{(k)}$.
Similarly, throughout the entire process, $A,B,\what A,\what B$ can be maintained without any modification.

For the latter,
compression will be employed to manage the storage costs.
In detail, when the number of rows of $C$ exceeds a certain threshold, 
we utilize its truncated singular value decomposition instead.
Suppose 
\[
	C=\begin{bmatrix}
	 	U & U^{\perp}
	 \end{bmatrix}\begin{bmatrix}
	 \Sigma &\\ &\Sigma^{\perp}
	 \end{bmatrix}\begin{bmatrix}
	 	V & V^{\perp}
	 \end{bmatrix}^{\T}=U\Sigma V^{\T}+\Omega.
\]
Recall \cref{thm:residual-positive-semi-definite} and treat $X=X(C),\wtd C=\wtd C(C)$ as functions of $C$.
Let $C^{\trun}:=\Sigma V^{\T}$ serve as a replacement for $C$ during the calculation process. 
Write $X^{\trun}=X(C^{\trun}),\wtd C^{\trun}=\wtd C(C^{\trun})$, and $\op C^{\trun}(\cdot)=\op C(\cdot)-C^{\T}C+(C^{\trun})^{\T}C^{\trun}=\op C(\cdot)-\Omega^{\T}\Omega$.
Then $X^{\trun}$, as an approximate solution to $\op C^{\trun}(X)=0$, satisfies $\op C^{\trun}(X^{\trun})=(\wtd C^{\trun})^{\T}\wtd C^{\trun}$ by \cref{thm:residual-positive-semi-definite}.
On the other hand, $X^{\trun}$ is still an approximate solution to $\op C(X)=0$, which satisfies
$\op C(X^{\trun})=(\wtd C^{\trun})^{\T}\wtd C^{\trun}+\Omega^{\T}\Omega$.
By truncating $C_{(k)}$ during each iteration, we obtain
\[
	\op C(X^{\trun}_{(k)})={C^{\trun}_{(k)}}^{\alt{\!\!}\T}C^{\trun}_{(k)}+\Omega_{(k)}^{\T}\Omega_{(k)}
	+\dots+\Omega_{(0)}^{\T}\Omega_{(0)}
	.
\]
In many scenarios, with a proper truncation criterion, the number of rows of $C_{(k)}$ is uniformly upper bounded by a fixed integer $l_{\trun}$,
which keeps the storage limited.

\subsection{Time complexity reduction}\label{ssec:complexity}

	Estimating the complexity of multiplying or solving linear systems with sparse matrices is intricate and varies based on the applied algorithms. 
	To describe it, let $\sol_p(M)$ be the number of floating-point operations (flops) required for computing $x^{\T}(M-\lambda I)^{-1}$ with a sparse matrix $M\in \mathbb{R}^{p\times p}$ and a scalar $\lambda\in \mathbb{R}$ by using a sparse matrix solver on a vector $x$,
and $\mul_{p\times q}(M)$ be that for $x^{\T}M$ with a (sparse) matrix $M\in \mathbb{R}^{p\times q}$ and a (dense) vector $x\in \mathbb{R}^p$.
Clearly, $\mul_{p\times q}(M)=(2p-1)q$ if $M$ is dense.
Moreover, for $M=\alt{\!}\begin{bmatrix}
	M_1 \\ \vdots \\ M_r
\end{bmatrix}\alt{\!}$ with $M_i\in \mathbb{R}^{p\times q}$ and $x\in \mathbb{R}^p$, $x^{\T}\ltimes M=\alt{\!}\begin{bmatrix}
	x^{\T}M_1 \\ \vdots \\ x^{\T}M_r
\end{bmatrix}\alt{\!}$ needs
$\sum_{i=1}^r\mul_{p\times q}(M_i)=:\mul^r_{p\times q}(M)$ flops.
In many scenarios, $\sol_p(M)$, $\mul_{p\times q}(M)$, and $\mul^r_{p\times q}(M)$ are related to the number $\nnz(M)$ of nonzero entries in $M$, but not necessarily.

Considering \cref{alg:an-radi-type-method-for-scares}, in the direct calculations,
the main time complexity concentrates in computing $C_{\gamma_{(k)}}$.
A naive inversion (solving the linear equations) needs $\frac{2}{3}n^3l_{(k)}$ flops (or $\frac{2}{3}n^3l_{\trun}$ after compression), which dominates 
the overall computational cost. 
However, leveraging techniques similar to those in \cite{bennerLP2008numerical,bennerBKS2018radi},
$C_{\gamma_{(k)}}$ can be obtained with significantly fewer than $\frac{2}{3}l_{(k)}n^3$ flops,
for $A_{(0)}=A$ is sparse and $A_{(k)}=A+BF_{(k-1)}$.


Write $A_{\gamma_{(k)}}=A-\gamma_{(k)}I$, and
then
\begin{align*}
	\alt{\MoveEqLeft[0]} C_{(k)}(A_{(k)}-\gamma_{(k)}I)^{-1}
	\alt{\\}&= C_{(k)}(A_{\gamma_{(k)}}+BF_{(k-1)})^{-1}
	\\&\clue{\cref{eq:smwf}}{=} 
	C_{(k)}\alt{\!}\left(A_{\gamma_{(k)}}^{-1}-A_{\gamma_{(k)}}^{-1}B(I+F_{(k-1)}A_{\gamma_{(k)}}^{-1}B)^{-1}F_{(k-1)}A_{\gamma_{(k)}}^{-1}\right)\alt{\!}
	\\&= C_{(k)}A_{\gamma_{(k)}}^{-1}\alt{\!}-\alt{\!}\ul{C_{(k)}A_{\gamma_{(k)}}^{-1}}\alt{\!}B(I\alt{\!}+\alt{\!}F_{(k-1)}A_{\gamma_{(k)}}^{-1}\alt{\!}B)^{-1}\ul{F_{(k-1)}A_{\gamma_{(k)}}^{-1}}, 
\end{align*}
which can be computed with complexity
	\begin{align}
		&\ub{(l_{(k)}+m)\sol_n(A)}_{[C_{(k)}, F_{(k-1)}]A_{\gamma_{(k)}}^{-1}}+\ub{(l_{(k)}+m)\mul_{n\times m}(B)}_{[C_{(k)}, F_{(k-1)}]A_{\gamma_{(k)}}^{-1}B}
		+\ub{\tfrac{2}{3}m^3+m^2l_{(k)}+m(m-1)l_{(k)}}_{*(I+*)^{-1}}+\ub{l_{(k)}(2m-1)n+l_{(k)}n}_{*-*\times *}
		\nonumber\\	\approx&\;	
		(l_{(k)}+m)\left(\sol_n(A)+\mul_{n\times m}(B)\right)+2l_{(k)}mn,
\label{eq:complexity:CAinv}
	\end{align}
or $(l_{\trun}+m)\left(\sol_n(A)+\mul_{n\times m}(B)\right)+2l_{\trun}mn$ after compression.
Here ``$\approx$'' indicates that the lower order terms are ignored, considering $m\ll n,r\ll n$.
Moreover,
\begin{align*}
	Y_{(k)}&=C_{(k)}(A_{(k)}-\gamma_{(k)}I)^{-1}B_{(k)}
	\\&\clue{\cref{eq:smwf}}{=} 
	C_{(k)}\alt{\!}\left(\alt{\!}A_{\gamma_{(k)}}^{-1}-A_{\gamma_{(k)}}^{-1}B(I\alt{\!}+\alt{\!}F_{(k\alt{\!}-\alt{\!}1)}A_{\gamma_{(k)}}^{-1}B)^{\alt{\!}-\alt{\!}1}F_{(k\alt{\!}-\alt{\!}1)}A_{\gamma_{(k)}}^{-1}\alt{\!}\right)\alt{\!}
	\alt{\\&\quad\;\,\qquad\qquad\qquad\qquad\qquad\qquad\qquad\cdot} BK_{(0)}^{-1}\dotsm K_{(k-1)}^{-1}
	\\&=
	\alt{\!}\left(\alt{\!}C_{(k)}A_{\gamma_{(k)}}^{-1}B(I\alt{\!}+\alt{\!}F_{(k-1)}A_{\gamma_{(k)}}^{-1}B)^{-1}\alt{\!}\right)\alt{\!}\left(\alt{\!}K_{(0)}^{-1}\dotsm K_{(k-1)}^{-1}\alt{\!}\right)\alt{\!},
\end{align*}
consisting of two factors already in hand.

The implicit calculations in each iteration of \cref{alg:an-radi-type-method-for-scares} include generating $\gamma$ in the beginning and verifying the stopping criterion at the end.
%
Generating appropriate values for $\gamma$ necessitates a detailed discussion, which is covered
in \cref{sssec:shift-strategy}.

To verify the stopping criterion,
let $\N{\cdot}$ be a unitary-invariant norm.
Then the approximate accuracy is measured by
\[
	\nres(X_{(k)}):=\frac{\N{\op C(X_{(k)})}}{\N{\op C(0)}}=\frac{\N{C_{(k)}^{\T} C_{(k)}}}{\N{C^{\T}C}}=\frac{\N{C_{(k)} C_{(k)}^{\T}}}{\N{CC^{\T}}}
	,
\]
whose complexity is $l_{(k)}^2\alt{\!}(2n\alt{\!}-\alt{\!}1)+\OO(l_{(k)}^3)\alt{\!}=\alt{\!}2r^{2k}l^2n+\OO(r^{3k}l^3)$. 
In particular, if the norm is chosen as the trace norm $\N{\cdot}_*$, then 
\[
	\nres_*(X_{(k)}):=\frac{\N{C_{(k)} C_{(k)}^{\T}}_*}{\N{CC^{\T}}_*}
	=\frac{\N{ C_{(k)}}_F^2}{\N{C}_F^2},
\]
and the complexity is reduced to $2l_{(k)}n=2r^{k}ln$.
Here $\N{\cdot}_F$ is the Frobenius norm.

When compression is applied, it then holds 
\begin{align*}
	\alt{&}\nres(X_{(k)}^{\trun})
	\alt{\\}&=\alt{\!}\frac{\N{\op C(X_{(k)}^{\trun})}}{\N{\op C(0)}}\alt{\!}=\alt{\!}\frac{\N{{C^{\trun}_{(k)}}^{\alt{\!\!}\T} C^{\trun}_{(k)}\alt{\!}+\alt{\!}\sum_{i=0}^{k}\alt{\!} \Omega_{(i)}^{\T}\Omega_{(i)}}}{\N{C^{\T}C}}
	\\&\le\alt{\!} \frac{\N{C^{\trun}_{(k)} {C^{\trun}_{(k)}}{}^{\alt{\!\!}\T}}}{\N{CC^{\T}}}\alt{\!}+\alt{\!\!}\sum_{i=0}^{k}\alt{\!}\frac{\N{\Omega_{(i)} \Omega_{(i)}^{\T}}}{\N{CC^{\T}}}
	\alt{\!}=\alt{\!} \frac{\N{\Sigma_{(k)}^2}}{\N{CC^{\T}}}\alt{\!}+\alt{\!\!}\sum_{i=0}^{k}\alt{\!}\frac{\N{(\Sigma_{(i)}^{\perp})^2}}{\N{CC^{\T}}}
		.
\end{align*}
The complexity is thus governed by the compression process, which relies on the spectral factorization of $C_{(k)}C_{(k)}^{\T}$:
\begin{align*}
	&\ub{l_{(k)}^2\alt{\!}(2n\alt{\!}-\alt{\!}1)}_{CC^{\T}}\alt{\!}+\alt{\!}\ub{\OO(l_{(k)}^3)}_{\makebox[0pt]{\scriptsize $U\Sigma\Sigma^{\T}U^{\T}=CC^{\T}$}}
	\alt{\!}+\alt{\!}\ub{(2l_{(k)}\alt{\!\!}-\alt{\!}1)l_{(k)}n}_{\Sigma V^{\T}=U^{\T}C}
	\alt{\!}\approx\alt{\!} 4r^2l_{\trun}^2n,
	\text{for $l_{(k)}\alt{\!\!}=\alt{\!}rl_{\trun}$.}
\end{align*}
This so-called cross-product algorithm for singular value decomposition implies the cost of compression is a lower-order term compare with that of one iteration (see \cref{ssec:complexity-list}).
If the cross-product-free algorithms are employed, the complexity becomes up to $6r^2l_{\trun}^2n+11r^3l_{\trun}^3$ (see, e.g. \cite{golubL1996matrix}), which is still a lower-order term.

\subsection{Practical algorithm and its complexity}\label{ssec:complexity-list}
Considering the details in \cref{ssec:storage-reduction,ssec:complexity}, we propose a practical method for SCAREs, namely \cref{alg:an-radi-type-method-for-scares:sparse},
in which the underlined terms have been calculated out in the indicated line or the same line.
Note that \cref{alg:an-radi-type-method-for-scares:sparse} does not reduce 
to the RADI method even for the CAREs, because RADI does not require a compression step, which is indeed necessary for SCAREs.

Additionally, The matrix $E$ in the input is set as $I$ and is therefore not utilized in this context; however, it would prove beneficial for solving the so-called generalized SCAREs, which will be discussed in \cref{ssec:generalized-SCARE}.

\begin{algorithm}[ht]
	\caption{an RADI-type method for the SCARE \cref{eq:scare-standard-form:scare} (practical)}\label{alg:an-radi-type-method-for-scares:sparse}
	\begin{algorithmic}[1]
		\REQUIRE $A,B,C,\what A,\what B$ and a strategy to generate $\gamma$, $E=I_n$.
		\STATE $F=0,C_M=0,K_{\Pi}=I,\Xi=[\;]$, $\nu_0=\N{C}_F^2$, $\nu_\Omega=0$.
		\LOOP
		\STATE Generate a proper $\gamma\in \mathbb{R}$.
		\STATE $C_A=C(A-\gamma E)^{-1},F_A=F(A-\gamma E)^{-1}$, $C_\gamma=\sqrt{2\gamma}(C_A-C_AB(I+F_AB)^{-1}F_A)$.
		\STATE $Y\alt{\!}=\alt{\!}\ul{C_AB(I\alt{\!}+\alt{\!}F_AB)^{\alt{\!}-\alt{\!}1}}_4K_{\Pi}^{\alt{\!}-\alt{\!}1}$, $C_M\alt{\!}\leftarrow\alt{\!} C_\gamma\ltimes \what A$, $\what Y\alt{\!}\leftarrow\alt{\!} C_\gamma\ltimes \what B.\alt{\!}$\alt{\!\!\!\!}
		\STATE (Cholesky factorization) $NN^{\T}=I+Y Y^{\T}$.
		\STATE $S=N^{-1}C_\gamma$, $\Xi\leftarrow\begin{bmatrix}
				\Xi & S^{\T}
	\end{bmatrix}$.
	\STATE $C\alt{\!}\leftarrow\alt{\!} C\alt{\!}+\alt{\!\!}\sqrt{2\gamma}N^{\alt{\!}-\alt{\!\!}\T}\alt{\!}SE$, $F\alt{\!}\leftarrow\alt{\!} F\alt{\!}-\alt{\!}\left(\alt{\!}K_{\Pi}^{\alt{\!}-\alt{\!}1}Y^{\T}\right)\alt{\!\!}\ul{(\alt{\!}\sqrt{2\gamma}N^{\alt{\!}-\alt{\!\!}\T}\alt{\!}SE)}.\alt{\!}$
		\STATE $C_M\leftarrow C_M+\what YF,\what Y\leftarrow \what YK_{\Pi}^{-1}$.
		\STATE (Cholesky f.) $K^{\T}K=I+\what Y^{\T}(N^{-\alt{\!}\T}\ltimes (N^{-1}\ltimes \what Y))$.
		\STATE $K_{\Pi}\leftarrow KK_{\Pi}$.
		\STATE $F\leftarrow F-\left(K_{\Pi}^{-1}K^{-\alt{\!}\T}\ul{(N^{-\alt{\!}\T}\ltimes (N^{-1}\ltimes \what Y))}_{10}^{\T}\right)  C_M$.
		\STATE (Cholesky f.) $MM^{\T}=\ul{(I+YY^{\T})}_6\otimes I_{r-1}+\what Y \what Y ^{\T}$.
		\STATE (Truncated SVD) $\begin{bmatrix}
			C
			\\
			M^{-1}C_M
		\end{bmatrix}= (*)\Sigma V^{\T}+\Omega$ under some truncation criterion.
		\STATE  $\nu_\Omega\leftarrow \nu_\Omega+\N{\Omega}_F^2$, $C\leftarrow \Sigma V^{\T}$.
		\IF{$\N{\Sigma}_F^2+\nu_\Omega\le \nu_0\varepsilon $ (stopping criterion)}
		\RETURN
		\ENDIF
		\ENDLOOP
		\ENSURE $\Xi$ that satisfies $X_\star\approx \Xi\Xi^{\T}$.
	\end{algorithmic}
\end{algorithm}

Similarly we have the sizes of matrices appearing in \cref{alg:an-radi-type-method-for-scares:sparse} listed in the table below:
\[
	\begin{array}{@{}c|c|c|c|c@{}}
                   & n                                   & m                  & (r-1)l_{\trun} & l_{\trun} \\ 
	(r-1)n         & \what A                             & \what B            &                &           \\
	n              & A,E                                   & B                  &                &           \\
	m              & F,F_A & K_{\Pi}, K &                &           \\
	rl_{\trun}     & \Omega                    &                    &                &           \\
	(r-1)l_{\trun} & C_M                                 & \what Y            & M              &           \\
	l_{\trun}      & C,C_A,C_\gamma,S,V^{\T}  & Y                  &          & N,\Sigma        \\
\end{array}
\]
Note that in the initial iteration when the truncation of $C$ is not performed, $l_{\trun}$ in the table should be replaced by $l$.
With the table we can easily count the space and time complexity in one iteration as follows,
where ``$\approx$'' indicates that the lower order terms are ignored, considering $l_{\trun}\ll n, m\ll n,r\ll n$.

Storage requirement:
\begin{itemize}
	\item $A,\what A,B,\what B$ remain unchanged throughout the process, requiring no extra storage.
	\item $S$, as part of the output, is stored in the output, while its storage can be allocated for $\sqrt{2\gamma}N^{-\alt{\!}\T}S,V$ in the last iteration and $C_A,C_\gamma$ in the current iteration before $S$ occupies it.
	\item $F$ requires $mn$ units, and $F_A$ requires $mn$ units.
	\item $C$ and $C_M$ require $rl_{\trun}n$ units that is also shared by $\Omega$.
	\item $K_{\Pi},K,\what Y,Y,M,N,\Sigma$ require $\ll\alt{\!} n$ units respectively and 
		can be safely ignored.
\end{itemize}
To sum up, besides the storage of $tl_{\trun}n$ units for the output ($t$ is the number of iterations), the intermediate terms still require $\approx(2m+rl_{\trun})n$ units.

Time complexity:
\begin{itemize}
	\item Step~3: will be discussed in \cref{sssec:shift-strategy}.
	\item Step~4:
		$
		\begin{multlined}[t]
			\alt{\!\!}\approx\alt{\!}\ub{(l_{\trun}\alt{\!}+\alt{\!}m)\alt{\!}\left(\sol_n\alt{\!}(A)\alt{\!}+\alt{\!}\mul_{n\alt{\!}\times\alt{\!} m}\alt{\!}(B)\right)\alt{\!}+\alt{\!}2l_{\trun}mn}_{\text{\cref{eq:complexity:CAinv}}}
			\alt{\!}+\alt{\!}\ub{\alt{\!}l_{\trun}n}_{\makebox[6pt][r]{\footnotesize $\sqrt{2\gamma}$}*}
			\alt{\!}.\alt{\!\!}
		\end{multlined}
		$
	\item Step~5: 
		$\ub{l_{\trun}m^2}_{Y}+\ub{l_{\trun}\mul^{r-1}_{n\times n}(\what A)}_{C_M}+\ub{l_{\trun}\mul^{r-1}_{n\times m}(\what B)}_{\what Y}
		$.
	\item Step~6: $\ub{l_{\trun}^2(2m-1)+l_{\trun}}_{I+YY^{\T}}+\ub{\tfrac{1}{3}l_{\trun}^3}_{N}=\oo(n)$.
	\item Steps~7--8: $
		\begin{aligned}[t]
			&\ub{l_{\trun}^2n}_{S}+\ub{l_{\trun}^2n+l_{\trun}n}_{\sqrt{2\gamma}N^{-\alt{\!}\T}S}+\ub{l_{\trun}n}_{C}
		\alt{\\[-3pt]&\qquad\qquad\quad}+\ub{l_{\trun}m^2+(2l_{\trun}-1)mn}_{(K_{\Pi}^{-1}Y^{\T})(*)}+\ub{mn}_{F}.
		\end{aligned}
		$
	\item Step~9: $
		\begin{aligned}[t]
			&\ub{(r-1)l_{\trun}(2m-1)n+(r-1)l_{\trun}n}_{C_M}
			\alt{\\[-3pt]&\qquad\qquad\qquad}
			+\ub{(r-1)l_{\trun}m^2}_{\what Y}
			\approx 2(r-1)l_{\trun}mn.
		\end{aligned}
		$
	\item Step~10: $
		\begin{aligned}[t]
			&\ub{(r-1)l_{\trun}^2m}_{N^{-1}\ltimes \what Y}+\ub{(r-1)l_{\trun}^2m}_{N^{-\alt{\!}\T}\ltimes *}
			\alt{\\[-3pt]&\quad}+\ub{(2(r-1)l_{\trun}-1)m^2+m}_{I+\what Y^{\T}*}+\ub{\tfrac{1}{3}m^3}_{K}=\oo(n).
		\end{aligned}
		$
	\item Steps~11--12: $
		\begin{aligned}[t]
			&\ub{\tfrac{m(m+1)(2m+1)}{6}}_{K_{\Pi}}+\ub{2(r-1)l_{\trun}m^2}_{K_{\Pi}^{-1}K^{-\alt{\!}\T}(*)}
			\alt{\\[-3pt]&\qquad\;\qquad}+\ub{(2(r-1)l_{\trun}-1)mn}_{* C_M}+\ub{mn}_{F}.
		\end{aligned}
		$
	\item Step~13: $
		\begin{aligned}[t]
			\ub{(r-1)^2l_{\trun}^2(2m-1)}_{\what Y\what Y^{\T}}+\ub{(r-1)l_{\trun}^2}_{MM^{\T}}+\ub{\tfrac{1}{3}(r-1)^3l_{\trun}^3}_{M}
			\alt{\\[-3pt]}
			=\oo(n).
		\end{aligned}
		$
	\item Steps~14--18: $\ub{(r-1)^2l_{\trun}^2n}_{M^{-1}C_M}+\ub{6r^2l_{\trun}^2n+11r^3l^3}_{\text{SVD}}
		$.
\end{itemize}
The total complexity is  
 $\approx (l_{\trun}+m)(\sol_n(A)+\mul_{n\times m}(B))+l_{\trun}(\mul^{r-1}_{n\times n}(\what A)+\mul^{r-1}_{n\times m}(\what B))+([7r^2-2r+3]l_{\trun}+4rm+3)l_{\trun}n$.
In particular, if $B$ and $\what B$ are not sparse, then it is 
$(l_{\trun}+m)\sol_n(A)+2l_{\trun}\mul^{r-1}_{n\times n}(\what A)+([7r^2-2r+3]l_{\trun}+6rm+3)l_{\trun}n+2m^2n$.

\subsection{Shift strategy}\label{sssec:shift-strategy}
As we will see, the convergence of the proposed method heavily depends on the strategy of choosing the shifts $\gamma$.
Since the method is of the RADI-type for SCAREs, it is logical to adopt the effective shift strategy of the RADI method for CAREs.
However, since it remains unclear whether in two successive iterations using $\gamma\in \mathbb{C}$ and its conjugate $\ol\gamma$ is equivalent to using $\ol \gamma$ and $\gamma$ due to the appearance of the left semi-tensor product,
to sidestep the potential trouble that the complex version would perhaps not be decomplexificated,
we are self-restricted to use real shifts. 

One is the so-called residual Hamiltonian shift \cite{bennerBKS2018radi}.
The idea is to project the SCARE at the current iteration onto a small-dimensional subspace spanned by the approximate solutions in several previous iterations,
and then find a suitable shift based on the projected SCARE.
Equivalently, for $\gamma_{(k)}$, letting $U$ be the orthonormal basis of the subspace $\subspan\set{S_{(k-1)},S_{(k-2)},\dots,S_{(k-s)}}$ for a prescribed $s$, then the projected SCARE is 
	\begin{stripalign}
\begin{align}
		\nonumber
		&U^{\T}C_{(k)}^{\T}C_{(k)}U+ U^{\T}A_{(k)}^{\T}U\breve{X}+\breve{X} U^{\T}A_{(k)}U 
		+ U^{\T}\ltimes\what A_{(k)}^{\T}\ltimes U\ltimes \breve{X}\ltimes U^{\T}\ltimes\what A_{(k)}\ltimes U \\
		&\qquad\qquad\ubul{=(*}{)^{\T}(I+ \what B_{(k)}^{\T}\ltimes U\ltimes \breve{X} \ltimes U^{\T}\ltimes \what B_{(k)})^{-1}}{(B_{(k)}^{\T}U\breve{X}+\what B_{(k)}^{\T}\ltimes U\ltimes \breve{X} \ltimes U^{\T}\ltimes \what A_{(k)}\ltimes U)}{.}
		\nonumber
\end{align}
	\end{stripalign}
Since solving this remains challenging, we compute the corresponding CARE instead:
\[
	U^{\T}\alt{\!}C_{(k)}^{\T}C_{(k)}U\alt{\!}+U^{\T}\alt{\!}A_{(k)}^{\T}U\breve{X}\alt{\!}+\alt{\!}\breve{X} U^{\T}\alt{\!}A_{(k)}U
	\alt{\!}=\alt{\!}\breve{X}U^{\T}\alt{\!}B_{(k)}B_{(k)}^{\T}\alt{\!}U\breve{X}
	,\alt{\!}
\]
which is obtained by treating $X$ in its products with $\what A$ or $\what B$ as $X_{(k-1)}$,
an approximation of the projected SCARE.

The shift is computed as follows:
determine all the eigenpairs $\left(\lambda,\begin{bmatrix}
	\breve{r}\\ \breve{q}
\end{bmatrix}\right)$ of the associated Hamiltonian matrix 
$
	\begin{bmatrix}
U^{\T}A_{(k)}U &U^{\T}B_{(k)}B_{(k)}^{\T}U\\
U^{\T}C_{(k)}^{\T}C_{(k)}U&-U^{\T}A_{(k)}^{\T}U
\end{bmatrix}\alt{\!},
$
and then select the eigenvalue $\lambda$ whose associated eigenvector satisfies that the corresponding $\N{\breve{q}}$ is the largest among all the eigenvectors.
Since we only use real shifts, $\gamma_{(k)}$ is chosen as the real part of $\lambda$.

Another approach is the so-called projection shift related to the Krylov subspace method for CARE.
Here, the shift $\gamma_{(k)}$ is chosen as the real part of the eigenvalue of the projected matrix $U^{\T}A_{(k)}U$ that has the smallest real part.
In fact, $A_{(k)}$ is the approximate closed-loop system matrix.

Concerning the complexity, since that for the projection shift is part of that for the residual Hamiltonian shift, we only focus on the residual Hamiltonian shift.
The way for computing all terms is listed as follows, with the time complexity included: 
\begin{itemize}
	\item use QR factorization to compute $U\in \mathbb{R}^{n\times sl_{\trun}}$: $2(2s^2l_{\trun}^2n-\frac{2}{3}s^3l_{\trun}^3)\approx 4s^2l_{\trun}^2n$ if Householder orthogonalization is used or its half if modified Gram-Schmidt process is used.
	\item get $U^{\T}\alt{\!}B_{(k)}B_{(k)}^{\T}\alt{\!}U$ by $U^{\T}\alt{\!}B_{(k)}\alt{\!}=\alt{\!}(U^{\T}\alt{\!}B)(K_{(0)}^{\alt{\!}-\alt{\!}1}\dotsm K_{(k\alt{\!}-\alt{\!}1)}^{\alt{\!}-\alt{\!}1})$:
		\[
			\ub{sl_{\trun}\mul_{n\times m}(B)}_{U^{\T}B}+\ub{sl_{\trun}m^2}_{U^{\T}BK_{\Pi}^{-1}}+\ub{s^2l_{\trun}^2(2m-1)}_{*^{\T}*}
			.
		\]
	\item compute $U^{\T}C_{(k)}^{\T}C_{(k)}U$:
		$\ub{sl_{\trun}^2(2n-1)}_{C_{(k)}U}+\ub{s^2l_{\trun}^2(2l-1)}_{*^{\T}*}
		$.
	\item get $U^{\T}\alt{\!}A_{(k)}U$ by $U^{\T}\alt{\!}A_{(k)}U\alt{\!}=\alt{\!}U^{\T}\alt{\!}AU\alt{\!}-\alt{\!}(U^{\T}\alt{\!}B)(F_{(k)}U)$:
		\[
			\begin{multlined}[t]
			\ub{sl_{\trun}\mul_{n\times n}(A)}_{AU}+\ub{s^2l_{\trun}^2(2n-1)}_{U^{\T}AU}+\ub{sl_{\trun}m(2n-1)}_{F_{(k)}U}
			\alt{\\}
			+\ub{s^2l_{\trun}^2(2m-1)}_{U^{\T}BF_{(k)}U}+\ub{s^2l_{\trun}^2}_{*-*}
				.
			\end{multlined}
		\]
\end{itemize}
The total complexity is $\approx  sl_{\trun}(\mul_{n\times n}(A)+\mul_{n\times m}(B))+sl_{\trun}(2(s+1)l_{\trun}+2m)n$ or $\approx  sl_{\trun}\mul_{n\times n}(A)+sl_{\trun}(2(s+1)l_{\trun}+4m)n$ if $B$ is not sparse.
The storage requirement mainly consists of $2sl_{\trun}n$ units for $U$ and $AU$.

Comparing this complexity to that in other parts in \cref{alg:an-radi-type-method-for-scares:sparse}, 
we observe that shift generation is non-negligible.
To reduce complexity, we could utilize all the possible eigenvalues of the respective matrices as the shifts in the next many iterations to avoid the calculation on shifts in every iteration.

\subsection{Deal with the original SCARE \texorpdfstring{\cref{eq:scare}}{(1.1)}}\label{ssec:deal-with-the-original-scare-eq}
Combining the relation between the original SCARE \cref{eq:scare} and the standard-form SCARE \cref{eq:scare-standard-form:scare},
\cref{alg:an-radi-type-method-for-scares:sparse} can be applied to solve \cref{eq:scare} after minor modifications, as outlined below:
\begin{itemize}
	\item The inputs are $A,B,\what A\leftarrow \begin{bmatrix}
			A_1\\ \vdots\\ A_{r-1}
	\end{bmatrix},\what B\leftarrow \begin{bmatrix}
	B_1\\ \vdots\\ B_{r-1}
\end{bmatrix}$ and $C$, where $C^{\T}C=Q-LR^{-1}L^{\T}$ and $C$ has $l\ll n$ rows.
	\item The initialization in Line~1 is:  $F=-R^{-1}L^{\T}$, $K_{\Pi}$ satisfies $K_{\Pi}^{\T}K_{\Pi}=R$.
	\item any Kronecker product $*\otimes I$ is replaced by $I\otimes *$, including those in the left semi-tensor products.
\end{itemize}

\subsection{Deal with the generalized SCARE}\label{ssec:generalized-SCARE}
In many practical applications, the control system is given as
\[
	E\alt{\!}\diff x(t) 
	\alt{\!}=\alt{\!} (A_{0}x(t) + B_{0}u(t))\diff t+\sum_{i=1}^{r-1}(A_ix(t)+B_iu(t))\diff w_{i}(t),
\]
rather than \cref{eq:scare-system:scare}.
In this scenario, instead of \cref{eq:scare}, the induced SCARE is the so-called generalized SCARE
	\begin{align}
		&Q+A_{{0}}^{\T}XE+E^{\T}XA_{{0}} + \sum_{i=1}^{{r-1}} A_i^{\T}XA_i
		\alt{\nonumber\\&}
			\ulub{\alt{\!}=\alt{\!}}{(L\alt{\!}+\alt{\!}E^{\T}\alt{\!}XB_{{0}}\alt{\!}+\alt{\!\!}\sum_{i=1}^{{r-1}}\alt{\!} A_i^{\T}\alt{\!}XB_i)}{
			(R\alt{\!}+\alt{\!\!}\sum_{i=1}^{{r-1}}\alt{\!} B_i^{\T}\alt{\!}XB_i)^{\alt{\!}-\alt{\!}1}\alt{\!}(}{*)^{\alt{\!}\T}\alt{\!}},\alt{\!}
\label{eq:generalized-scare}
	\end{align}
which leads to an (ordinary) SCARE if  $E$ is nonsingular:
\[
	\begin{multlined}[t]
		{C'}{}^{\T}C'+{A'}{}^{\T} X+XA' + {\what A'}{}^{\T}\ltimes X\ltimes \what A' \alt{\\}
		\ulub{=}{(X B+{\what A'}{}^{\T}\ltimes X \ltimes \what B)}{(I+\what B^{\T}\ltimes X \ltimes \what B)^{-1}(}{*)^{\T}}
		,
	\end{multlined}
\]
where $C'=CE^{-1},A'=AE^{-1},\what A'=\what AE^{-1}$.

\Cref{alg:an-radi-type-method-for-scares:sparse} can also be used to solve the generalized SCARE, with the known $E$ used in place of $E=I_n$ in the input.
The additional complexity comes from:
\begin{itemize}
	\item $A-\gamma E$ in Step~4: since $A$ and $E$ typically share the same sparse structure, $A-\gamma E$ costs $2\nnz(A)$ flops; 
	\item $(\sqrt{2\gamma}N^{-\alt{\!}\T}S)E$ is Step~8: it requires $l_{\trun}\mul_{n\times n}(E)$ flops;
		\item generating $\gamma$: $E^{-1}U$ has to be computed, which costs $sl_{\trun}\sol_{n\times n}(E)$ flops.
\end{itemize}

An alternative way to compute the solution to the generalized SCARE is treating $XE$ in \cref{eq:generalized-scare} as the unknown, which makes \cref{eq:generalized-scare} an (ordinary) SCARE.

\section{Experiments and discussions}\label{sec:experiments-and-discussions}
We will provide numerical results from two examples to illustrate the behavior of \cref{alg:an-radi-type-method-for-scares:sparse}.
All experiments were conducted in MATLAB 2021a under the Windows 10 Professional 64-bit operating system on a PC with a Intel Core i7-8700 processor at 3.20GHz and 64GB RAM. 

In general, the SCAREs correspond with the CAREs by omitting $\what A,\what B$.
To highlight the difference between various stochastic processes, we select two CAREs and modify them for our test.
For each example, we test six SCAREs:
\begin{enumerate}
	\item original CARE, namely $r=1$: $C^{\T}C+A^{\T}X+XA-XBB^{\T}X=0$;
	\item $r=2$ with noise scale $\texttt{ns} = 10^{-5}$/$10^{-4}$/$10^{-3}$/$10^{-2}$: $A,B,C$ are given as above, and $A_1,B_1$ are generated by the MATLAB function \texttt{ns*M.*sprand(M)} for $M=A,B$.
	\item $r=5$: $A,B,C$ are given as above, and $A_1,A_2,A_3,A_4$ are the four distinct $A_1$'s in item~2 (with $\texttt{ns} = 10^{-5}$/$10^{-4}$/$10^{-3}$/$10^{-2}$ respectively), and the same applied for $B_1,B_2,B_3,B_4$.
\end{enumerate}
In each example, we will use five types of shift strategies:
\begin{enumerate}
	\item ``hami $1$/$2$/$5$'': the residual Hamiltonian shifts with the prescribed $s=1,2,5$ respectively, and computing the shifts one time for the next many iterations (one shift per iteration, and recomputing  until all shifts are used).
	\item ``hami c $1$/$2$/$5$'': the residual Hamiltonian shifts with the prescribed $s=1,2,5$ respectively, and computing the shift in each iteration. 
	\item ``proj $1$/$2$/$5$'': the projection shifts with the prescribed $s=1,2,5$ respectively, and computing the shifts one time for the next many iterations (one shift per iteration, and computing again until all shifts are used).
	\item ``proj c $1$/$2$/$5$'': the projection shifts with the prescribed $s=1,2,5$ respectively, and computing  the shift in each iteration.
\end{enumerate}
Note that ``hami c *'' and ``proj *'' are respectively the default choices 
of the package M-M.E.S.S.\ version~2.1 \cite{SaaKB21-mmess-2.1} of the RADI for CAREs.

The stopping criteria are: $\nres_*(X_{(k)})< 10^{-12}$; or reaching $300$ iterations. 
The compression or truncation uses a cross-product-free algorithm (actually MATLAB built-in function \texttt{svd}).
To adapt with the accuracy, the truncation criterion is: $\frac{\N{\Omega_{(k)}}_*}{\N{CC^{\T}}_*}\le 3.33\times10^{-15}=10^{-12}/300$.

\newcommand\plottime[6][1]{%
	\tikz[yscale=1.4,xscale=0.8*#1]{
		\fill[black]           (0,0) rectangle (#2, .1);
		\fill[black!30!white] (#2,0) rectangle (#3, .1);
		\fill[black!60!white] (#3,0) rectangle (#4, .1);
		\fill[black!10!white] (#4,0) rectangle (#5, .1);
		\fill[black!40!white] (#5,0) rectangle (#6, .1);
}}
\begin{example}[Rail]\label{eg:rail}
	The example is a version of the steel profile cooling model from the Oberwolfach Model Reduction Benchmark Collection, hosted at MORwiki \cite{morwiki_steel}.
	The data include $A\prec 0, E\succ 0, B,C$ with $m=7,l=6$ and four different $n=1357,5177,20209,79841$, while the corresponding $\nnz(A)\alt{\!}=\alt{\!}8985, 35185, 139233, 553921$ respectively.
	This is an example on the generalized SCARE.


	Since $A\prec 0$, the system is stable at $r=1$. 
	However, the systems at $r=2$ and $5$ are still not guaranteed to be stable or stabilizable.

	\cref{tab:results-rail} collects the basic performance data, including iteration counts, the numbers of columns of $\Xi$, and the running time, of all $4\times 6\times 12$ experiments (different $n$'s, different $r$'s, different shift strategies). 
	Additional information is included  in the column ``remark'':
	if the criterion on $\nres_*$ is not met, the $\nres_*(X_{(300)})$ is reported;
	otherwise, a color bar in five shades of gray  \plottime[.3]{1}{2}{3}{4}{5} indicates the timings for 1) $\gamma$ in Step~3, 2) $C_A,F_A$ in Step~4, 3) $C_{\gamma}\ltimes A$ in Step~5, 4) truncated SVD in Step~14, and 5) others.

	\cref{fig:rail} shows the 
	convergence behavior for the case $n=79841$ in terms of both time (in seconds) vs.\ accuracy ($\nres_*$) and the numbers of columns of $\Xi$ vs.\ accuracy.
	Note that some curves were truncated to enhance the visibility of those convergent curves.
\end{example}
\begin{example}[Lung2$-$]\label{eg:lung2-}
	The example is generated in this way: $-A$ is the matrix \texttt{lung2} in the SuiteSparse Matrix Collection \cite{davisH2011university} (formerly the University of Florida Sparse Matrix Collection), modelling temperature and water vapor transport in the human lung; $B,C$ are generated by MATLAB function \texttt{rand}.
	Here $n=109460,\nnz(A)=492564,m=10,l=5$ and $B,C$ are dense matrices.


	In this example $A$ is nonsymmetric with its eigenvalues located in the left half-plane, thus $A$ is stable. 
	Similarly to \cref{eg:rail}, the system at $r=1$ is stable, while the systems at $r=2$ and $5$ are not guaranteed to be stable or stabilizable.

	An additional stopping criterion is implemented in this exmaple: the number of columns of $\Xi$ reaches $50000$, for $\Xi$ is a dense matrix and a $n\times 50000$ matrix occupies about 40.8GB. 

	Moreover, we conduct another round of test to observe the effect of truncations, with a different truncation criterion: $\frac{\N{\Omega_{(k)}}_*}{\N{CC^{\T}}_*}\le 10^{-10}$.
	To adapt with that, the stopping criteria are modified to: $\nres_*(X_{(k)})< 10^{-10}$; or reaching $300$ iterations; or the difference of $\nres$ and the accumulated truncation errors in all compressions diminishes under $10^{-10}$.

	Results from both rounds are collected in \cref{tab:results-lung2-}, where the flag ``m'' indicates that the number of columns of $\Xi$ approaches $50000$, and the flag ``t'' indicates that the total $\nres$ does not meet the criteria but does when the accumulated truncation error is excluded.
	The 
	convergent behavior for the large truncation case is shown in \cref{fig:lung2}.
\end{example}

\begin{sidewaystable}[htp]
	\centering
	\scriptsize
	\caption{Results: Rail}
	\label{tab:results-rail}
	\begin{tabular}{@{}c@{\,}*{6}{|@{\,}r@{\,}r@{\,}r@{\,}l@{\,}}}
		\hline
		Rail
		& \multicolumn{3}{l}{\makebox[0pt][l]{$r=1$}}         & 
		& \multicolumn{3}{l}{\makebox[0pt][l]{$r=2,10^{-5}$}} & 
		& \multicolumn{3}{l}{\makebox[0pt][l]{$r=2,10^{-4}$}} & 
		& \multicolumn{3}{l}{\makebox[0pt][l]{$r=2,10^{-3}$}} & 
		& \multicolumn{3}{l}{\makebox[0pt][l]{$r=2,10^{-2}$}} & 
		& \multicolumn{3}{l}{\makebox[0pt][l]{$r=5$}}         & 
\\ \hline
		shift
		& ite    & dim      & time  & remark 
		& ite    & dim      & time  & remark 
		& ite    & dim      & time  & remark 
		& ite    & dim      & time  & remark 
		& ite    & dim      & time  & remark 
		& ite    & dim      & time  & remark 
		\\
		\hline
		$n=1357$
		&&&& \plottime[4]{0}{0}{0}{0}{.1}$\,=\,$.1s
		&&&& \plottime[4]{0}{0}{0}{0}{.1}$\,=\,$.1s
		&&&& \plottime[4]{0}{0}{0}{0}{.1}$\,=\,$.1s
		&&&& \plottime[4]{0}{0}{0}{0}{.1}$\,=\,$.1s
		&&&& \plottime[.5]{0}{0}{0}{0}{.8}$\,=\,$.8s
		&&&& \plottime[.5]{0}{0}{0}{0}{.8}$\,=\,$.8s
\\ \hline
  hami 1   & 38  & 228   & 0.137  & \plottime[4]{0.021}{0.074}{0.075}{0.080}{0.137} & 38  & 228   & 0.132  & \plottime[4]{0.009}{0.061}{0.070}{0.077}{0.132} & 38  & 234   & 0.108  & \plottime[4]{0.004}{0.056}{0.062}{0.070}{0.108} & 39  & 410   & 0.131  & \plottime[4]{0.003}{0.060}{0.068}{0.082}{0.131} & 54  & 1030  & 0.240  & \plottime[.5]{0.005}{0.091}{0.105}{0.149}{0.240} & 69  & 1455  & 0.688  & \plottime[.5]{0.008}{0.125}{0.221}{0.458}{0.688} \\
  hami 2   & 39  & 212   & 0.111  & \plottime[4]{0.005}{0.059}{0.059}{0.064}{0.111} & 39  & 212   & 0.121  & \plottime[4]{0.004}{0.057}{0.064}{0.071}{0.121} & 39  & 218   & 0.110  & \plottime[4]{0.003}{0.057}{0.064}{0.071}{0.110} & 58  & 534   & 0.185  & \plottime[4]{0.005}{0.088}{0.099}{0.117}{0.185} & 97  & 1572  & 0.402  & \plottime[.5]{0.012}{0.165}{0.189}{0.256}{0.402} & 125 & 2120  & 1.051  & \plottime[.5]{0.029}{0.236}{0.381}{0.693}{1.051} \\
  hami 5   & 52  & 282   & 0.119  & \plottime[4]{0.006}{0.076}{0.076}{0.080}{0.119} & 52  & 282   & 0.147  & \plottime[4]{0.006}{0.078}{0.087}{0.096}{0.147} & 53  & 294   & 0.147  & \plottime[4]{0.006}{0.078}{0.087}{0.096}{0.147} & 112 & 878   & 0.342  & \plottime[4]{0.017}{0.175}{0.196}{0.226}{0.342} & 160 & 2279  & 0.688  & \plottime[.5]{0.032}{0.295}{0.336}{0.433}{0.688} & 227 & 3328  & 1.755  & \plottime[.5]{0.059}{0.427}{0.677}{1.157}{1.755} \\
  hami c 1 & 36  & 210   & 0.119  & \plottime[4]{0.025}{0.073}{0.074}{0.078}{0.119} & 36  & 210   & 0.114  & \plottime[4]{0.017}{0.066}{0.072}{0.078}{0.114} & 36  & 205   & 0.132  & \plottime[4]{0.026}{0.074}{0.081}{0.087}{0.132} & 40  & 422   & 0.161  & \plottime[4]{0.029}{0.087}{0.095}{0.111}{0.161} & 51  & 1052  & 0.312  & \plottime[.5]{0.078}{0.162}{0.176}{0.221}{0.312} & 59  & 1428  & 1.048  & \plottime[.5]{0.129}{0.232}{0.316}{0.815}{1.048} \\
  hami c 2 & 30  & 176   & 0.088  & \plottime[4]{0.020}{0.061}{0.061}{0.064}{0.088} & 30  & 176   & 0.102  & \plottime[4]{0.021}{0.062}{0.067}{0.072}{0.102} & 28  & 174   & 0.096  & \plottime[4]{0.020}{0.058}{0.063}{0.068}{0.096} & 33  & 380   & 0.166  & \plottime[4]{0.050}{0.099}{0.106}{0.121}{0.166} & 65  & 1026  & 0.432  & \plottime[.5]{0.184}{0.283}{0.298}{0.339}{0.432} & 55  & 1213  & 0.895  & \plottime[.5]{0.299}{0.390}{0.466}{0.688}{0.895} \\
  hami c 5 & 29  & 174   & 0.122  & \plottime[4]{0.054}{0.094}{0.094}{0.097}{0.122} & 29  & 174   & 0.132  & \plottime[4]{0.053}{0.092}{0.097}{0.103}{0.132} & 29  & 177   & 0.135  & \plottime[4]{0.054}{0.094}{0.099}{0.105}{0.135} & 31  & 354   & 0.258  & \plottime[4]{0.152}{0.197}{0.204}{0.218}{0.258} & 300 & 4906  & 4.392  & \quad4.884e-05
& 300 & 5219  & 6.259  & \quad4.858e-05
 \\
  proj 1   & 37  & 220   & 0.107  & \plottime[4]{0.012}{0.063}{0.064}{0.068}{0.107} & 37  & 220   & 0.119  & \plottime[4]{0.007}{0.057}{0.064}{0.071}{0.119} & 38  & 228   & 0.109  & \plottime[4]{0.003}{0.056}{0.063}{0.070}{0.109} & 36  & 414   & 0.127  & \plottime[4]{0.003}{0.056}{0.063}{0.080}{0.127} & 57  & 1222  & 0.271  & \plottime[.5]{0.004}{0.097}{0.113}{0.171}{0.271} & 65  & 1742  & 0.890  & \plottime[.5]{0.015}{0.130}{0.230}{0.587}{0.890} \\
  proj 2   & 35  & 210   & 0.084  & \plottime[4]{0.003}{0.050}{0.050}{0.054}{0.084} & 35  & 210   & 0.097  & \plottime[4]{0.003}{0.050}{0.056}{0.062}{0.097} & 35  & 216   & 0.097  & \plottime[4]{0.003}{0.051}{0.056}{0.063}{0.097} & 58  & 532   & 0.185  & \plottime[4]{0.004}{0.087}{0.098}{0.118}{0.185} & 94  & 1591  & 0.408  & \plottime[.5]{0.006}{0.157}{0.183}{0.259}{0.408} & 120 & 2223  & 1.170  & \plottime[.5]{0.013}{0.208}{0.357}{0.791}{1.170} \\
  proj 5   & 68  & 314   & 0.152  & \plottime[4]{0.005}{0.095}{0.095}{0.101}{0.152} & 68  & 314   & 0.187  & \plottime[4]{0.005}{0.098}{0.109}{0.119}{0.187} & 68  & 323   & 0.177  & \plottime[4]{0.005}{0.096}{0.107}{0.117}{0.177} & 112 & 887   & 0.342  & \plottime[4]{0.010}{0.169}{0.189}{0.220}{0.342} & 146 & 1960  & 0.551  & \plottime[.5]{0.014}{0.236}{0.271}{0.355}{0.551} & 210 & 2846  & 1.581  & \plottime[.5]{0.027}{0.354}{0.571}{1.076}{1.581} \\
  proj c 1 & 30  & 180   & 0.082  & \plottime[4]{0.015}{0.055}{0.055}{0.058}{0.082} & 30  & 180   & 0.095  & \plottime[4]{0.013}{0.054}{0.059}{0.064}{0.095} & 31  & 196   & 0.100  & \plottime[4]{0.014}{0.056}{0.061}{0.068}{0.100} & 31  & 453   & 0.142  & \plottime[4]{0.023}{0.070}{0.077}{0.096}{0.142} & 52  & 1947  & 0.436  & \plottime[.5]{0.091}{0.190}{0.210}{0.306}{0.436} & 58  & 3059  & 1.663  & \plottime[.5]{0.156}{0.284}{0.422}{1.206}{1.663} \\
  proj c 2 & 49  & 294   & 0.145  & \plottime[4]{0.028}{0.099}{0.099}{0.104}{0.145} & 49  & 294   & 0.161  & \plottime[4]{0.028}{0.095}{0.103}{0.112}{0.161} & 49  & 304   & 0.163  & \plottime[4]{0.028}{0.097}{0.105}{0.115}{0.163} & 49  & 1055  & 0.327  & \plottime[4]{0.097}{0.178}{0.192}{0.238}{0.327} & 268 & 10375 & 2.914  & \plottime[.5]{1.167}{1.682}{1.787}{2.264}{2.914} & 300 & 16991 & 10.857 & \quad2.786e-03
\\
  proj c 5 & 68  & 408   & 0.236  & \plottime[4]{0.083}{0.176}{0.176}{0.183}{0.236} & 68  & 408   & 0.273  & \plottime[4]{0.086}{0.182}{0.194}{0.206}{0.273} & 68  & 419   & 0.272  & \plottime[4]{0.085}{0.180}{0.192}{0.204}{0.272} & 300 & 7683  & 4.904  & \quad1.588e-02
& 300 & 11885 & 8.996  & \quad1.749e-02
& 300 & 17881 & 21.923 & \quad1.771e-02
\\
\hline
 		$n=5177$
&&&& \plottime{0}{0}{0}{0}{.4}$\,=\,$.4s
&&&& \plottime{0}{0}{0}{0}{.4}$\,=\,$.4s
&&&& \plottime[.5]{0}{0}{0}{0}{.8}$\,=\,$.8s
&&&& \plottime[.5]{0}{0}{0}{0}{.8}$\,=\,$.8s
&&&& \plottime[.25]{0}{0}{0}{0}{1.6}$\,=\,$1.6s
&&&& \plottime[.125]{0}{0}{0}{0}{3.2}$\,=\,$3.2s
\\ \hline
  hami 1   & 43  & 258   & 0.393  & \plottime{0.016}{0.254}{0.255}{0.264}{0.393} & 43  & 258   & 0.449  & \plottime{0.013}{0.261}{0.292}{0.309}{0.449} & 43  & 267   & 0.479  & \plottime[.5]{0.019}{0.274}{0.305}{0.324}{0.479} & 46  & 534   & 0.578  & \plottime[.5]{0.011}{0.306}{0.345}{0.403}{0.578} & 80  & 1868  & 1.825  & \plottime[.25]{0.020}{0.719}{0.865}{1.140}{1.825} & 90  & 2655  & 4.700  & \plottime[.125]{0.024}{0.901}{1.673}{2.912}{4.700} \\
  hami 2   & 42  & 240   & 0.388  & \plottime{0.009}{0.258}{0.258}{0.266}{0.388} & 42  & 240   & 0.461  & \plottime{0.010}{0.264}{0.296}{0.312}{0.461} & 42  & 248   & 0.450  & \plottime[.5]{0.011}{0.260}{0.291}{0.308}{0.450} & 64  & 710   & 0.840  & \plottime[.5]{0.017}{0.439}{0.499}{0.579}{0.840} & 111 & 2568  & 2.426  & \plottime[.25]{0.052}{1.008}{1.186}{1.578}{2.426} & 137 & 3738  & 6.502  & \plottime[.125]{0.043}{1.341}{2.394}{4.075}{6.502} \\
  hami 5   & 68  & 370   & 0.651  & \plottime{0.020}{0.426}{0.426}{0.440}{0.651} & 68  & 370   & 0.759  & \plottime{0.019}{0.441}{0.493}{0.519}{0.759} & 67  & 374   & 0.724  & \plottime[.5]{0.019}{0.423}{0.472}{0.498}{0.724} & 132 & 1161  & 1.552  & \plottime[.5]{0.047}{0.859}{0.966}{1.081}{1.552} & 187 & 3759  & 3.654  & \plottime[.25]{0.065}{1.567}{1.839}{2.383}{3.654} & 237 & 5448  & 9.384  & \plottime[.125]{0.096}{2.096}{3.643}{5.938}{9.384} \\
  hami c 1 & 42  & 249   & 0.480  & \plottime{0.082}{0.341}{0.341}{0.350}{0.480} & 42  & 249   & 0.531  & \plottime{0.085}{0.338}{0.370}{0.387}{0.531} & 37  & 232   & 0.481  & \plottime[.5]{0.076}{0.300}{0.329}{0.346}{0.481} & 48  & 522   & 0.711  & \plottime[.5]{0.120}{0.424}{0.466}{0.526}{0.711} & 70  & 1669  & 1.948  & \plottime[.25]{0.371}{0.970}{1.100}{1.354}{1.948} & 152 & 2983  & 5.952  & \plottime[.125]{0.681}{1.794}{2.679}{4.004}{5.952} \\
  hami c 2 & 38  & 228   & 0.447  & \plottime{0.099}{0.326}{0.326}{0.334}{0.447} & 38  & 228   & 0.505  & \plottime{0.097}{0.328}{0.356}{0.371}{0.505} & 36  & 223   & 0.470  & \plottime[.5]{0.091}{0.306}{0.333}{0.348}{0.470} & 42  & 529   & 0.715  & \plottime[.5]{0.203}{0.460}{0.498}{0.558}{0.715} & 300 & 6933  & 9.084  & \quad1.999e-05
  & 288 & 4099  & 9.427  & \plottime[.15]{1.899}{3.773}{5.112}{6.778}{9.427} \\
  hami c 5 & 34  & 191   & 0.451  & \plottime{0.181}{0.363}{0.363}{0.369}{0.451} & 34  & 191   & 0.564  & \plottime{0.194}{0.416}{0.438}{0.450}{0.564} & 34  & 199   & 0.499  & \plottime[.5]{0.189}{0.371}{0.393}{0.405}{0.499} & 115 & 1138  & 2.571  & \plottime[.5]{1.314}{1.978}{2.068}{2.182}{2.571} & 300 & 6667  & 15.229 & \quad6.051e-05
& 300 & 7369  & 22.933 & \quad5.950e-05
\\
  proj 1   & 39  & 234   & 0.373  & \plottime{0.016}{0.242}{0.242}{0.250}{0.373} & 39  & 234   & 0.469  & \plottime{0.015}{0.272}{0.303}{0.320}{0.469} & 39  & 245   & 0.437  & \plottime[.5]{0.013}{0.249}{0.279}{0.296}{0.437} & 51  & 593   & 0.708  & \plottime[.5]{0.014}{0.369}{0.418}{0.494}{0.708} & 70  & 1779  & 1.653  & \plottime[.25]{0.015}{0.649}{0.779}{1.041}{1.653} & 85  & 2586  & 4.908  & \plottime[.125]{0.023}{0.844}{1.573}{3.125}{4.908} \\
  proj 2   & 45  & 262   & 0.445  & \plottime{0.011}{0.295}{0.295}{0.304}{0.445} & 45  & 262   & 0.489  & \plottime{0.011}{0.281}{0.315}{0.332}{0.489} & 45  & 273   & 0.528  & \plottime[.5]{0.012}{0.297}{0.333}{0.353}{0.528} & 62  & 683   & 0.830  & \plottime[.5]{0.015}{0.442}{0.501}{0.580}{0.830} & 105 & 2352  & 2.205  & \plottime[.25]{0.028}{0.888}{1.054}{1.384}{2.205} & 134 & 3342  & 6.202  & \plottime[.125]{0.031}{1.167}{2.141}{3.981}{6.202} \\
  proj 5   & 67  & 334   & 0.598  & \plottime{0.016}{0.391}{0.391}{0.402}{0.598} & 67  & 334   & 0.721  & \plottime{0.026}{0.427}{0.475}{0.498}{0.721} & 68  & 350   & 0.689  & \plottime[.5]{0.017}{0.405}{0.453}{0.476}{0.689} & 141 & 1127  & 1.744  & \plottime[.5]{0.034}{0.911}{1.026}{1.157}{1.744} & 172 & 2905  & 2.951  & \plottime[.25]{0.043}{1.314}{1.536}{1.915}{2.951} & 227 & 3994  & 7.795  & \plottime[.125]{0.056}{1.772}{3.039}{5.092}{7.795} \\
  proj c 1 & 34  & 204   & 0.380  & \plottime{0.066}{0.272}{0.272}{0.279}{0.380} & 34  & 204   & 0.418  & \plottime{0.064}{0.264}{0.290}{0.304}{0.418} & 35  & 231   & 0.453  & \plottime[.5]{0.071}{0.286}{0.312}{0.328}{0.453} & 34  & 555   & 0.652  & \plottime[.5]{0.105}{0.335}{0.376}{0.451}{0.652} & 58  & 2466  & 2.513  & \plottime[.25]{0.480}{1.098}{1.267}{1.656}{2.513} & 62  & 3876  & 7.896  & \plottime[.125]{0.786}{1.596}{2.609}{5.283}{7.896} \\
  proj c 2 & 55  & 330   & 0.623  & \plottime{0.127}{0.456}{0.456}{0.468}{0.623} & 55  & 330   & 0.706  & \plottime{0.134}{0.462}{0.503}{0.525}{0.706} & 54  & 352   & 0.681  & \plottime[.5]{0.130}{0.445}{0.484}{0.509}{0.681} & 58  & 1067  & 1.438  & \plottime[.5]{0.397}{0.827}{0.908}{1.058}{1.438} & 300 & 13979 & 16.030 & \quad9.740e-04
& 300 & 21221 & 46.063 & \quad1.319e-02
\\
  proj c 5 & 77  & 462   & 1.160  & \plottime{0.499}{0.939}{0.940}{0.956}{1.160} & 77  & 462   & 1.196  & \plottime{0.456}{0.879}{0.931}{0.962}{1.196} & 145 & 980   & 2.600  & \plottime[.5]{1.062}{1.880}{1.983}{2.123}{2.600} & 300 & 9049  & 18.157 & \quad1.824e-02
& 300 & 13645 & 26.907 & \quad1.793e-02
& 300 & 21405 & 77.577 & \quad1.850e-02
\\
\hline
		$n=20209$
&&&& \plottime[.2]{0}{0}{0}{0}{2}$\,=\,$2s
&&&& \plottime[.2]{0}{0}{0}{0}{2}$\,=\,$2s
&&&& \plottime[.2]{0}{0}{0}{0}{2}$\,=\,$2s
&&&& \plottime[.1]{0}{0}{0}{0}{4}$\,=\,$4s
&&&& \plottime[.02]{0}{0}{0}{0}{20}$\,=\,$20s
&&&& \plottime[.005]{0}{0}{0}{0}{80}$\,=\,$80s
\\ \hline
  hami 1    & 44  & 264   & 2.139   & \plottime[.2]{0.055}{1.726}{1.726}{1.748}{2.139} & 44 & 264 & 2.291 & \plottime[.2]{0.050}{1.643}{1.783}{1.853}{2.291} & 44  & 277  & 2.348  & \plottime[.2]{0.049}{1.658}{1.801}{1.876}{2.348} & 58  & 702  & 4.090  & \plottime[.1]{0.052}{2.414}{2.706}{2.975}{4.090}      & 97  & 2519  & 10.101 & \plottime[.02]{0.080}{4.611}{5.410}{6.576}{10.101}     & 114 & 3829  & 27.219  & \plottime[.005]{0.102}{5.908}{10.561}{17.275}{27.219}     \\
  hami 2    & 46  & 276   & 2.119   & \plottime[.2]{0.046}{1.718}{1.718}{1.739}{2.119} & 46 & 276 & 2.550 & \plottime[.2]{0.046}{1.834}{1.983}{2.061}{2.550} & 46  & 284  & 2.543  & \plottime[.2]{0.052}{1.811}{1.964}{2.043}{2.543} & 73  & 881  & 5.085  & \plottime[.1]{0.061}{3.010}{3.364}{3.714}{5.085}      & 129 & 3315  & 13.241 & \plottime[.02]{0.105}{6.052}{7.097}{8.587}{13.241}     & 156 & 5105  & 36.127  & \plottime[.005]{0.168}{8.007}{14.335}{22.901}{36.127}     \\
  hami 5    & 72  & 407   & 3.349   & \plottime[.2]{0.077}{2.731}{2.731}{2.762}{3.349} & 72 & 407 & 3.798 & \plottime[.2]{0.077}{2.746}{2.970}{3.078}{3.798} & 72  & 412  & 3.835  & \plottime[.2]{0.079}{2.762}{2.987}{3.100}{3.835} & 159 & 1438 & 9.801  & \plottime[.1]{0.160}{6.245}{6.897}{7.407}{9.801}      & 248 & 5532  & 23.068 & \plottime[.02]{0.241}{11.172}{12.959}{15.368}{23.068}  & 272 & 8304  & 58.747  & \plottime[.005]{0.390}{13.668}{24.107}{37.349}{58.747}    \\
  hami c 1  & 59  & 354   & 3.009   & \plottime[.2]{0.377}{2.496}{2.496}{2.524}{3.009} & 59 & 354 & 3.513 & \plottime[.2]{0.410}{2.590}{2.790}{2.890}{3.513} & 39  & 247  & 2.333  & \plottime[.2]{0.275}{1.698}{1.828}{1.899}{2.333} & 52  & 634  & 4.139  & \plottime[.1]{0.581}{2.625}{2.881}{3.120}{4.139}      & 73  & 1830  & 8.720  & \plottime[.02]{1.452}{4.725}{5.310}{6.135}{8.720}      & 95  & 2855  & 22.777  & \plottime[.005]{2.268}{6.761}{10.360}{15.316}{22.777}     \\
  hami c 2  & 42  & 252   & 2.338   & \plottime[.2]{0.447}{1.953}{1.953}{1.973}{2.338} & 42 & 252 & 2.617 & \plottime[.2]{0.450}{1.968}{2.104}{2.173}{2.617} & 43  & 250  & 2.672  & \plottime[.2]{0.452}{2.014}{2.152}{2.221}{2.672} & 55  & 687  & 4.952  & \plottime[.1]{1.162}{3.298}{3.580}{3.851}{4.952}     & 300 & 8956  & 46.279  & \quad2.514e-04
& 300 & 10817 & 90.851  & \quad2.916e-04
\\
  hami c 5  & 40  & 225   & 2.697   & \plottime[.2]{0.908}{2.339}{2.339}{2.357}{2.697} & 40 & 225 & 2.867 & \plottime[.2]{0.878}{2.279}{2.404}{2.463}{2.867} & 41  & 239  & 2.977  & \plottime[.2]{0.932}{2.371}{2.499}{2.564}{2.977} & 143 & 1410 & 13.949 & \plottime[.1]{5.335}{10.579}{11.182}{11.682}{13.949} & 300 & 8494  & 75.660  & \quad1.165e-04
& 300 & 10370 & 133.669 & \quad1.211e-04
\\
  proj 1    & 48  & 277   & 2.367   & \plottime[.2]{0.060}{1.910}{1.910}{1.934}{2.367} & 48 & 277 & 2.486 & \plottime[.2]{0.055}{1.791}{1.941}{2.014}{2.486} & 48  & 290  & 2.546  & \plottime[.2]{0.054}{1.805}{1.961}{2.040}{2.546} & 58  & 757  & 4.240  & \plottime[.1]{0.053}{2.453}{2.754}{3.059}{4.240}      & 74  & 2214  & 8.594  & \plottime[.02]{0.060}{3.647}{4.332}{5.491}{8.594}      & 90  & 3388  & 24.858  & \plottime[.005]{0.077}{4.874}{8.988}{15.992}{24.858}      \\
  proj 2    & 46  & 276   & 2.200   & \plottime[.2]{0.046}{1.783}{1.783}{1.805}{2.200} & 46 & 276 & 2.521 & \plottime[.2]{0.049}{1.811}{1.962}{2.039}{2.521} & 46  & 289  & 2.477  & \plottime[.2]{0.049}{1.752}{1.902}{1.983}{2.477} & 65  & 830  & 4.714  & \plottime[.1]{0.073}{2.746}{3.081}{3.412}{4.714}      & 114 & 3091  & 12.208 & \plottime[.02]{0.090}{5.460}{6.435}{7.937}{12.208}     & 143 & 4547  & 33.706  & \plottime[.005]{0.144}{7.259}{12.961}{21.845}{33.706}     \\
  proj 5    & 69  & 411   & 3.356   & \plottime[.2]{0.080}{2.733}{2.733}{2.766}{3.356} & 69 & 411 & 3.725 & \plottime[.2]{0.077}{2.672}{2.897}{3.009}{3.725} & 69  & 412  & 3.797  & \plottime[.2]{0.076}{2.707}{2.936}{3.052}{3.797} & 143 & 1345 & 8.785  & \plottime[.1]{0.138}{5.620}{6.208}{6.679}{8.785}      & 197 & 4121  & 17.840 & \plottime[.02]{0.166}{8.727}{10.092}{12.046}{17.840}   & 260 & 5929  & 46.365  & \plottime[.005]{0.304}{12.078}{20.196}{30.744}{46.365}    \\
  proj c 1  & 38  & 228   & 1.958   & \plottime[.2]{0.237}{1.611}{1.611}{1.629}{1.958} & 38 & 228 & 2.210 & \plottime[.2]{0.242}{1.628}{1.750}{1.812}{2.210} & 38  & 269  & 2.380  & \plottime[.2]{0.285}{1.681}{1.820}{1.902}{2.380} & 38  & 689  & 3.741  & \plottime[.1]{0.541}{2.158}{2.402}{2.710}{3.741}      & 51  & 2474  & 10.286 & \plottime[.02]{1.801}{4.728}{5.438}{6.851}{10.286}     & 53  & 4000  & 32.607  & \plottime[.005]{3.062}{6.932}{11.232}{22.008}{32.607}     \\
  proj c 2  & 61  & 366   & 3.467   & \plottime[.2]{0.665}{2.908}{2.908}{2.937}{3.467} & 61 & 366 & 3.817 & \plottime[.2]{0.658}{2.875}{3.075}{3.177}{3.817} & 60  & 424  & 4.042  & \plottime[.2]{0.727}{2.941}{3.160}{3.292}{4.042} & 68  & 1583 & 8.919  & \plottime[.1]{2.361}{5.353}{5.872}{6.612}{8.919}      & 273 & 13870 & 69.869 & \plottime[.02]{23.215}{39.066}{42.952}{51.177}{69.869} & 297 & 22034 & 199.092 & \plottime[.005]{42.077}{63.506}{87.284}{141.668}{199.092} \\
  proj c 5  & 85  & 510   & 5.781   & \plottime[.2]{2.032}{5.040}{5.040}{5.078}{5.781} & 85 & 510 & 6.323 & \plottime[.2]{2.047}{5.059}{5.328}{5.469}{6.323} & 300 & 2413 & 25.151 & \quad1.785e-02
            & 300 & 10719 & 103.627 & \quad1.820e-02
           & 300 & 15399 & 167.329 & \quad1.817e-02
& 300 & 24667 & 469.599 & \quad1.834e-02
\\
\hline
		$n=79841$
&&&& \plottime[.04]{0}{0}{0}{0}{10}$\,=\,$10s
&&&& \plottime[.04]{0}{0}{0}{0}{10}$\,=\,$10s
&&&& \plottime[.04]{0}{0}{0}{0}{10}$\,=\,$10s
&&&& \plottime[.02]{0}{0}{0}{0}{20}$\,=\,$20s
&&&& \plottime[.01]{0}{0}{0}{0}{40}$\,=\,$40s
&&&& \plottime[.005]{0}{0}{0}{0}{80}$\,=\,$80s
\\ \hline
  hami 1   & 45 & 270 & 9.069  & \plottime[.04]{0.215}{7.096}{7.097}{7.203}{9.069}     & 45 & 270 & 10.312 & \plottime[.04]{0.208}{6.990}{7.685}{7.978}{10.312}     & 45 & 290 & 10.478 & \plottime[.04]{0.203}{7.034}{7.743}{8.077}{10.478}    & 66  & 868   & 19.661   & \plottime[.02]{0.219}{11.121}{12.560}{14.036}{19.661}   & 115 & 3133  & 55.269   & \plottime[.01]{0.416}{23.847}{28.013}{35.649}{55.269}    & 139 & 4678  & 151.880  & \plottime[.005]{0.539}{30.579}{54.499}{101.067}{151.880}   \\
  hami 2   & 51 & 306 & 10.030 & \plottime[.04]{0.169}{7.843}{7.843}{7.963}{10.030}    & 51 & 306 & 11.613 & \plottime[.04]{0.168}{7.896}{8.683}{9.012}{11.613}     & 51 & 318 & 11.697 & \plottime[.04]{0.169}{7.894}{8.691}{9.045}{11.697}    & 87  & 1043  & 25.054   & \plottime[.02]{0.293}{14.408}{16.189}{18.159}{25.054}   & 173 & 4462  & 82.862   & \plottime[.01]{0.697}{36.331}{42.553}{53.852}{82.862}    & 175 & 6275  & 199.616  & \plottime[.005]{0.737}{39.296}{70.786}{131.606}{199.616}   \\
  hami 5   & 77 & 437 & 15.031 & \plottime[.04]{0.335}{11.860}{11.860}{12.032}{15.031} & 77 & 437 & 17.189 & \plottime[.04]{0.335}{11.807}{12.960}{13.423}{17.189}  & 79 & 452 & 17.680 & \plottime[.04]{0.335}{12.158}{13.339}{13.827}{17.680} & 179 & 1774  & 47.518   & \plottime[.02]{0.804}{29.042}{32.347}{35.329}{47.518}    & 300 & 7165  & 135.768  & \quad1.515e-08
& 300 & 10325 & 326.825  & \quad4.601e-09
\\
  hami c 1 & 54 & 324 & 11.932 & \plottime[.04]{1.439}{9.598}{9.598}{9.726}{11.932}    & 54 & 324 & 13.439 & \plottime[.04]{1.445}{9.489}{10.323}{10.669}{13.439}   & 49 & 294 & 12.185 & \plottime[.04]{1.308}{8.596}{9.346}{9.689}{12.185}    & 68  & 758   & 21.371   & \plottime[.02]{2.625}{13.545}{14.892}{16.242}{21.371}   & 109 & 2454  & 55.983   & \plottime[.01]{9.385}{30.595}{34.129}{40.022}{55.983}    & 86  & 2790  & 100.886  & \plottime[.005]{10.217}{28.332}{42.684}{70.486}{100.886}   \\
  hami c 2 & 47 & 266 & 10.674 & \plottime[.04]{1.743}{8.714}{8.714}{8.820}{10.674}    & 47 & 266 & 12.066 & \plottime[.04]{1.748}{8.744}{9.453}{9.737}{12.066}     & 44 & 266 & 11.513 & \plottime[.04]{1.703}{8.268}{8.945}{9.243}{11.513}    & 76  & 897   & 27.352   & \plottime[.02]{6.019}{18.227}{19.773}{21.405}{27.352}   & 300 & 10168 & 253.725  & \quad4.862e-04
& 300 & 12198 & 470.670  & \quad5.338e-04
 \\
  hami c 5 & 45 & 259 & 12.960 & \plottime[.04]{4.423}{11.081}{11.082}{11.183}{12.960} & 45 & 259 & 14.263 & \plottime[.04]{4.428}{11.095}{11.770}{12.045}{14.263}  & 45 & 268 & 14.415 & \plottime[.04]{4.487}{11.136}{11.827}{12.123}{14.415} & 159 & 1554  & 66.590   & \plottime[.02]{25.875}{50.573}{53.486}{55.918}{66.590}  & 300 & 9803  & 422.460  & \quad2.524e-04
& 300 & 12080 & 738.199  & \quad2.592e-04
 \\
  proj 1   & 53 & 308 & 10.427 & \plottime[.04]{0.246}{8.178}{8.178}{8.300}{10.427}    & 53 & 308 & 11.997 & \plottime[.04]{0.242}{8.216}{9.018}{9.342}{11.997}     & 53 & 330 & 12.222 & \plottime[.04]{0.242}{8.279}{9.098}{9.475}{12.222}    & 63  & 924   & 20.076   & \plottime[.02]{0.215}{10.862}{12.332}{14.157}{20.076}   & 89  & 2863  & 50.626   & \plottime[.01]{0.347}{20.020}{23.828}{32.410}{50.626}    & 104 & 4397  & 141.570  & \plottime[.005]{0.388}{24.788}{46.083}{93.861}{141.570}    \\
  proj 2   & 56 & 328 & 10.973 & \plottime[.04]{0.199}{8.610}{8.610}{8.740}{10.973}    & 56 & 328 & 12.625 & \plottime[.04]{0.208}{8.592}{9.453}{9.804}{12.625}     & 56 & 348 & 12.833 & \plottime[.04]{0.198}{8.658}{9.533}{9.925}{12.833}    & 93  & 1212  & 27.917   & \plottime[.02]{0.337}{15.680}{17.696}{20.002}{27.917}   & 128 & 3912  & 69.601   & \plottime[.01]{0.481}{28.337}{33.590}{44.779}{69.601}    & 155 & 5808  & 186.726  & \plottime[.005]{0.616}{35.328}{64.306}{123.652}{186.726}   \\
  proj 5   & 73 & 437 & 14.380 & \plottime[.04]{0.328}{11.312}{11.313}{11.483}{14.380} & 73 & 437 & 16.511 & \plottime[.04]{0.331}{11.262}{12.378}{12.839}{16.511}  & 73 & 459 & 16.755 & \plottime[.04]{0.329}{11.330}{12.470}{12.983}{16.755} & 140 & 1521  & 38.780   & \plottime[.02]{0.746}{23.034}{25.773}{28.468}{38.780}   & 213 & 5242  & 93.561   & \plottime[.01]{0.949}{42.120}{49.290}{61.785}{93.561}    & 273 & 7793  & 256.540  & \plottime[.005]{1.406}{57.047}{98.899}{171.274}{256.540}   \\
  proj c 1 & 41 & 246 & 9.071  & \plottime[.04]{1.072}{7.289}{7.289}{7.388}{9.071}     & 41 & 246 & 10.255 & \plottime[.04]{1.072}{7.247}{7.882}{8.144}{10.255}     & 41 & 299 & 10.809 & \plottime[.04]{1.176}{7.386}{8.064}{8.462}{10.809}    & 42  & 885   & 19.401   & \plottime[.02]{3.136}{10.679}{11.936}{13.862}{19.401}   & 63  & 3490  & 68.597   & \plottime[.01]{13.702}{30.919}{35.158}{47.296}{68.597}   & 62  & 5346  & 195.384  & \plottime[.005]{22.213}{42.780}{65.530}{137.830}{195.384}  \\
  proj c 2 & 68 & 408 & 15.509 & \plottime[.04]{2.579}{12.616}{12.616}{12.778}{15.509} & 68 & 408 & 17.506 & \plottime[.04]{2.574}{12.608}{13.648}{14.080}{17.506}  & 68 & 487 & 18.854 & \plottime[.04]{3.008}{13.281}{14.398}{15.025}{18.854} & 79  & 2040  & 48.893   & \plottime[.02]{14.430}{29.479}{32.190}{36.630}{48.893}   & 300 & 17743 & 442.668  & \quad2.876e-04
& 300 & 28129 & 1198.404 & \quad2.690e-03
\\
  proj c 5 & 93 & 558 & 27.262 & \plottime[.04]{9.628}{23.359}{23.359}{23.579}{27.262} & 93 & 558 & 30.055 & \plottime[.04]{9.587}{23.406}{24.822}{25.409}{30.055} & 300 & 2419  & 108.313  & \quad1.812e-02
& 300 & 12472 & 571.087  & \quad1.827e-02
& 300 & 17147 & 937.535  & \quad1.824e-02
& 300 & 28759 & 2664.699 & \quad1.836e-02
\\
\hline
	\end{tabular}
\end{sidewaystable}
\begin{sidewaystable}[htp]
	\centering
	\scriptsize
	\caption{Results: Lung2$-$}
	\label{tab:results-lung2-}
	\begin{tabular}{@{}c@{\,}*{6}{|@{\,}r@{\,}r@{\,}r@{\,}l@{\,}}}
		\hline
		Lung2$-$
		& \multicolumn{3}{l}{\makebox[0pt][l]{$r=1$}}         & 
		& \multicolumn{3}{l}{\makebox[0pt][l]{$r=2,10^{-5}$}} & 
		& \multicolumn{3}{l}{\makebox[0pt][l]{$r=2,10^{-4}$}} & 
		& \multicolumn{3}{l}{\makebox[0pt][l]{$r=2,10^{-3}$}} & 
		& \multicolumn{3}{l}{\makebox[0pt][l]{$r=2,10^{-2}$}} & 
		& \multicolumn{3}{l}{\makebox[0pt][l]{$r=5$}}         & 
\\ \hline
		shift
		& ite    & dim      & time  & remark 
		& ite    & dim      & time  & remark 
		& ite    & dim      & time  & remark 
		& ite    & dim      & time  & remark 
		& ite    & dim      & time  & remark 
		& ite    & dim      & time  & remark 
		\\
		\hline
&&&& \plottime[.01143]{0}{0}{0}{0}{28}$\,=\,$28s
&&&& \plottime[.00889]{0}{0}{0}{0}{36}$\,=\,$36s
&&&& \plottime[.005]{0}{0}{0}{0}{64}$\,=\,$64s
&&&& \plottime[.002]{0}{0}{0}{0}{160}$\,=\,$160s
&&&& 
&&&& 
\\\hline
hami 1 & 225 & 1068 & 72.118 &\plottime[.01143]{2.131}{51.448}{51.449}{51.998}{72.118}
& 227 & 2640 & 111.937 &\plottime[.00889]{1.839}{58.301}{63.313}{71.302}{111.937}
& 242 & 7538 & 211.564 &\plottime[.005]{1.797}{79.781}{90.810}{125.680}{211.564}
& 285 & 22685 & 562.064 &\plottime[.002]{2.170}{145.672}{175.341}{333.040}{562.064}
& 151 & 49956 & 1189.453 &\;\:\makebox[0pt]{m}\;\:3.483e-01
& 136 & 49733 & 3616.463 &\;\:\makebox[0pt]{m}\;\:6.007e-01
\\ hami 2 & 221 & 1029 & 69.421 &\plottime[.01143]{1.430}{49.513}{49.514}{50.033}{69.421}
& 238 & 2864 & 118.630 &\plottime[.00889]{1.468}{61.127}{66.458}{75.140}{118.630}
& 253 & 8231 & 217.259 &\plottime[.005]{1.778}{81.152}{93.002}{126.697}{217.259}
& 300 & 24173 & 593.969 &\quad6.230e-04
& 149 & 49745 & 1182.042 &\;\:\makebox[0pt]{m}\;\:3.622e-01
& 143 & 49958 & 3487.781 &\;\:\makebox[0pt]{m}\;\:6.280e-01
\\ hami 5 & 259 & 1202 & 81.357 &\plottime[.01143]{1.450}{58.013}{58.014}{58.629}{81.357}
& 249 & 2889 & 122.105 &\plottime[.00889]{1.518}{63.574}{69.036}{77.535}{122.105}
& 300 & 9514 & 254.030 &\quad7.083e-08
& 300 & 32476 & 714.833 &\quad7.591e-04
& 149 & 49745 & 1315.983 &\;\:\makebox[0pt]{m}\;\:3.622e-01
& 143 & 49958 & 3515.085 &\;\:\makebox[0pt]{m}\;\:6.280e-01
\\ hami c 1 & 209 & 972 & 75.662 &\plottime[.01143]{9.324}{56.011}{56.011}{56.535}{75.662}
& 215 & 2567 & 122.405 &\plottime[.00889]{17.105}{70.513}{75.326}{83.173}{122.405}
& 227 & 6833 & 232.828 &\plottime[.005]{41.455}{113.009}{123.071}{154.485}{232.828}
& 300 & 19915 & 649.161 &\quad6.253e-04
& 154 & 49816 & 1736.838 &\;\:\makebox[0pt]{m}\;\:3.437e-01
& 145 & 49714 & 4351.744 &\;\:\makebox[0pt]{m}\;\:3.628e-01
\\ hami c 2 & 214 & 1070 & 83.045 &\plottime[.01143]{13.534}{62.164}{62.165}{62.777}{83.045}
& 300 & 3111 & 172.548 &\quad1.363e-09
& 300 & 12383 & 489.086 &\quad5.200e-07
& 300 & 37584 & 1523.890 &\quad8.527e-04
& 164 & 49972 & 2988.868 &\;\:\makebox[0pt]{m}\;\:4.268e-01
& 139 & 49980 & 5576.575 &\;\:\makebox[0pt]{m}\;\:4.306e-01
\\ hami c 5 & 300 & 1500 & 142.640 &\quad8.270e-08
& 300 & 4326 & 306.000 &\quad4.553e-10
& 300 & 18448 & 1395.591 &\quad2.193e-06
& 300 & 31149 & 3209.602 &\quad9.156e-04
& 173 & 49904 & 13150.396 &\;\:\makebox[0pt]{m}\;\:4.275e-01
& 145 & 49714 & 17241.920 &\;\:\makebox[0pt]{m}\;\:4.311e-01
\\ proj 1 & 205 & 961 & 65.037 &\plottime[.01143]{1.906}{46.521}{46.521}{47.004}{65.037}
& 219 & 2416 & 105.445 &\plottime[.00889]{1.748}{55.666}{60.378}{67.377}{105.445}
& 261 & 8230 & 217.629 &\plottime[.005]{1.631}{82.528}{94.342}{126.827}{217.629}
& 300 & 27798 & 668.425 &\quad7.814e-01
& 207 & 49880 & 1296.299 &\;\:\makebox[0pt]{m}\;\:7.990e-01
& 186 & 49634 & 3254.690 &\;\:\makebox[0pt]{m}\;\:7.998e-01
\\ proj 2 & 213 & 1008 & 67.619 &\plottime[.01143]{1.326}{48.173}{48.174}{48.698}{67.619}
& 247 & 2811 & 119.852 &\plottime[.00889]{1.453}{62.522}{67.964}{76.138}{119.852}
& 300 & 9812 & 255.872 &\quad6.815e-10
& 300 & 29246 & 647.732 &\quad7.972e-01
& 248 & 49838 & 1177.181 &\;\:\makebox[0pt]{m}\;\:7.996e-01
& 216 & 49868 & 3142.694 &\;\:\makebox[0pt]{m}\;\:8.000e-01
\\ proj 5 & 279 & 1346 & 92.933 &\plottime[.01143]{1.694}{66.105}{66.106}{66.864}{92.933}
& 300 & 3726 & 151.791 &\quad6.997e-01
& 300 & 13008 & 316.733 &\quad7.979e-01
& 300 & 22186 & 498.987 &\quad7.993e-01
& 300 & 42067 & 1012.625 &\quad8.001e-01
& 223 & 49946 & 3231.155 &\;\:\makebox[0pt]{m}\;\:8.000e-01
\\ proj c 1 & 275 & 1341 & 96.837 &\plottime[.01143]{12.132}{71.781}{71.781}{72.466}{96.837}
& 300 & 3023 & 156.986 &\quad1.038e-01
& 300 & 10245 & 315.674 &\quad7.981e-01
& 300 & 9751 & 315.085 &\quad8.298e-01
& 300 & 19126 & 543.366 &\quad8.509e-01
& 300 & 31839 & 1798.403 &\quad8.613e-01
\\ proj c 2 & 300 & 1500 & 110.897 &\quad7.907e-01
& 300 & 3741 & 190.756 &\quad7.918e-01
& 300 & 4352 & 205.108 &\quad8.178e-01
& 300 & 7503 & 287.975 &\quad9.355e-01
& 300 & 10788 & 389.053 &\quad9.819e-01
& 300 & 16904 & 1025.207 &\quad9.819e-01
\\ proj c 5 & 300 & 1500 & 143.600 &\quad9.996e-01
& 300 & 1500 & 153.648 &\quad9.996e-01
& 300 & 2406 & 183.203 &\quad9.996e-01
& 300 & 4779 & 297.800 &\quad9.996e-01
& 300 & 6260 & 378.961 &\quad9.996e-01
& 300 & 10161 & 882.586 &\quad9.996e-01
\\\hline
\multicolumn{4}{l}{large truncation 1e-10}&
&&&& 
&&&& 
&&&& 
&&&& 
&&&& 
\\\hline
hami 1 & 161 & 651 & 52.061 &\;\:\makebox[0pt]{t}\;\:4.746e-10
& 166 & 667 & 60.908 &\;\:\makebox[0pt]{t}\;\:5.538e-10
& 182 & 767 & 64.277 &\;\:\makebox[0pt]{t}\;\:1.195e-09
& 152 & 1564 & 71.166 &\;\:\makebox[0pt]{t}\;\:5.306e-09
& 243 & 9503 & 258.963 &\;\:\makebox[0pt]{t}\;\:1.404e-08
& 248 & 11611 & 614.994 &\;\:\makebox[0pt]{t}\;\:1.586e-08
\\ hami 2 & 158 & 636 & 54.968 &\;\:\makebox[0pt]{t}\;\:4.785e-10
& 156 & 649 & 56.980 &\;\:\makebox[0pt]{t}\;\:5.383e-10
& 158 & 707 & 57.822 &\;\:\makebox[0pt]{t}\;\:1.377e-09
& 188 & 1823 & 85.610 &\;\:\makebox[0pt]{t}\;\:5.871e-09
& 300 & 9825 & 262.684 &\quad1.607e-08
& 300 & 11758 & 633.241 &\quad1.808e-08
\\ hami 5 & 192 & 772 & 65.792 &\;\:\makebox[0pt]{t}\;\:4.751e-10
& 189 & 787 & 67.087 &\;\:\makebox[0pt]{t}\;\:5.483e-10
& 193 & 836 & 69.512 &\;\:\makebox[0pt]{t}\;\:1.159e-09
& 199 & 1853 & 89.338 &\;\:\makebox[0pt]{t}\;\:5.601e-09
& 300 & 10346 & 271.767 &\quad2.774e-01
& 300 & 11897 & 638.888 &\quad1.842e-02
\\ hami c 1 & 148 & 615 & 52.813 &\;\:\makebox[0pt]{t}\;\:4.702e-10
& 160 & 655 & 63.621 &\;\:\makebox[0pt]{t}\;\:5.708e-10
& 300 & 1799 & 128.069 &\quad1.891e-01
& 167 & 1686 & 87.417 &\;\:\makebox[0pt]{t}\;\:6.249e-09
& 300 & 9303 & 329.797 &\quad2.080e-01
& 300 & 11466 & 701.564 &\quad2.143e-01
\\ hami c 2 & 161 & 661 & 63.173 &\;\:\makebox[0pt]{t}\;\:4.766e-10
& 165 & 669 & 67.543 &\;\:\makebox[0pt]{t}\;\:5.802e-10
& 182 & 921 & 78.344 &\;\:\makebox[0pt]{t}\;\:1.056e-09
& 198 & 1946 & 111.595 &\;\:\makebox[0pt]{t}\;\:7.888e-09
& 300 & 15307 & 567.969 &\quad3.078e-01
& 300 & 18077 & 1163.186 &\quad3.070e-01
\\ hami c 5 & 300 & 1218 & 132.194 &\quad1.209e-08
& 300 & 1237 & 134.307 &\quad3.739e-09
& 300 & 1505 & 151.814 &\quad5.314e-07
& 300 & 3610 & 255.586 &\quad7.465e-04
& 300 & 15833 & 1130.226 &\quad4.236e-01
& 300 & 18490 & 1867.696 &\quad4.280e-01
\\ proj 1 & 179 & 723 & 58.178 &\;\:\makebox[0pt]{t}\;\:3.842e-10
& 179 & 723 & 65.857 &\;\:\makebox[0pt]{t}\;\:4.182e-10
& 177 & 728 & 64.344 &\;\:\makebox[0pt]{t}\;\:9.819e-10
& 186 & 1513 & 80.139 &\;\:\makebox[0pt]{t}\;\:5.044e-09
& 279 & 9464 & 263.219 &\;\:\makebox[0pt]{t}\;\:1.702e-08
& 285 & 11940 & 627.683 &\;\:\makebox[0pt]{t}\;\:1.926e-08
\\ proj 2 & 152 & 691 & 53.414 &\;\:\makebox[0pt]{t}\;\:3.866e-10
& 154 & 709 & 57.612 &\;\:\makebox[0pt]{t}\;\:3.918e-10
& 154 & 711 & 56.334 &\;\:\makebox[0pt]{t}\;\:1.096e-09
& 179 & 1666 & 80.215 &\;\:\makebox[0pt]{t}\;\:6.465e-09
& 300 & 11228 & 285.698 &\quad7.191e-01
& 300 & 13812 & 708.079 &\quad7.860e-01
\\ proj 5 & 230 & 1075 & 75.402 &\;\:\makebox[0pt]{t}\;\:3.893e-10
& 230 & 1075 & 82.369 &\;\:\makebox[0pt]{t}\;\:4.003e-10
& 235 & 1086 & 86.206 &\;\:\makebox[0pt]{t}\;\:1.045e-09
& 300 & 2431 & 128.420 &\quad8.261e-09
& 300 & 14644 & 359.886 &\quad7.993e-01
& 300 & 15840 & 782.248 &\quad7.990e-01
\\ proj c 1 & 166 & 796 & 60.837 &\;\:\makebox[0pt]{t}\;\:3.824e-10
& 166 & 796 & 68.721 &\;\:\makebox[0pt]{t}\;\:4.204e-10
& 168 & 806 & 68.774 &\;\:\makebox[0pt]{t}\;\:9.047e-10
& 181 & 1579 & 89.043 &\;\:\makebox[0pt]{t}\;\:5.933e-09
& 300 & 11521 & 355.818 &\quad7.975e-01
& 300 & 13687 & 771.156 &\quad7.982e-01
\\ proj c 2 & 300 & 941 & 111.483 &\quad7.759e-01
& 300 & 941 & 116.237 &\quad7.778e-01
& 300 & 945 & 114.284 &\quad7.899e-01
& 300 & 2450 & 155.020 &\quad7.844e-01
& 300 & 3513 & 177.289 &\quad8.095e-01
& 300 & 3935 & 279.611 &\quad8.116e-01
\\ proj c 5 & 300 & 1500 & 141.727 &\quad9.996e-01
& 300 & 1500 & 150.211 &\quad9.996e-01
& 300 & 1500 & 153.267 &\quad9.996e-01
& 300 & 1500 & 153.331 &\quad9.996e-01
& 300 & 1802 & 163.526 &\quad9.996e-01
& 300 & 1802 & 224.638 &\quad9.996e-01
\\ \hline
	\end{tabular}
\end{sidewaystable}

\begin{figure}[t]
	\centering
	\aptfigure{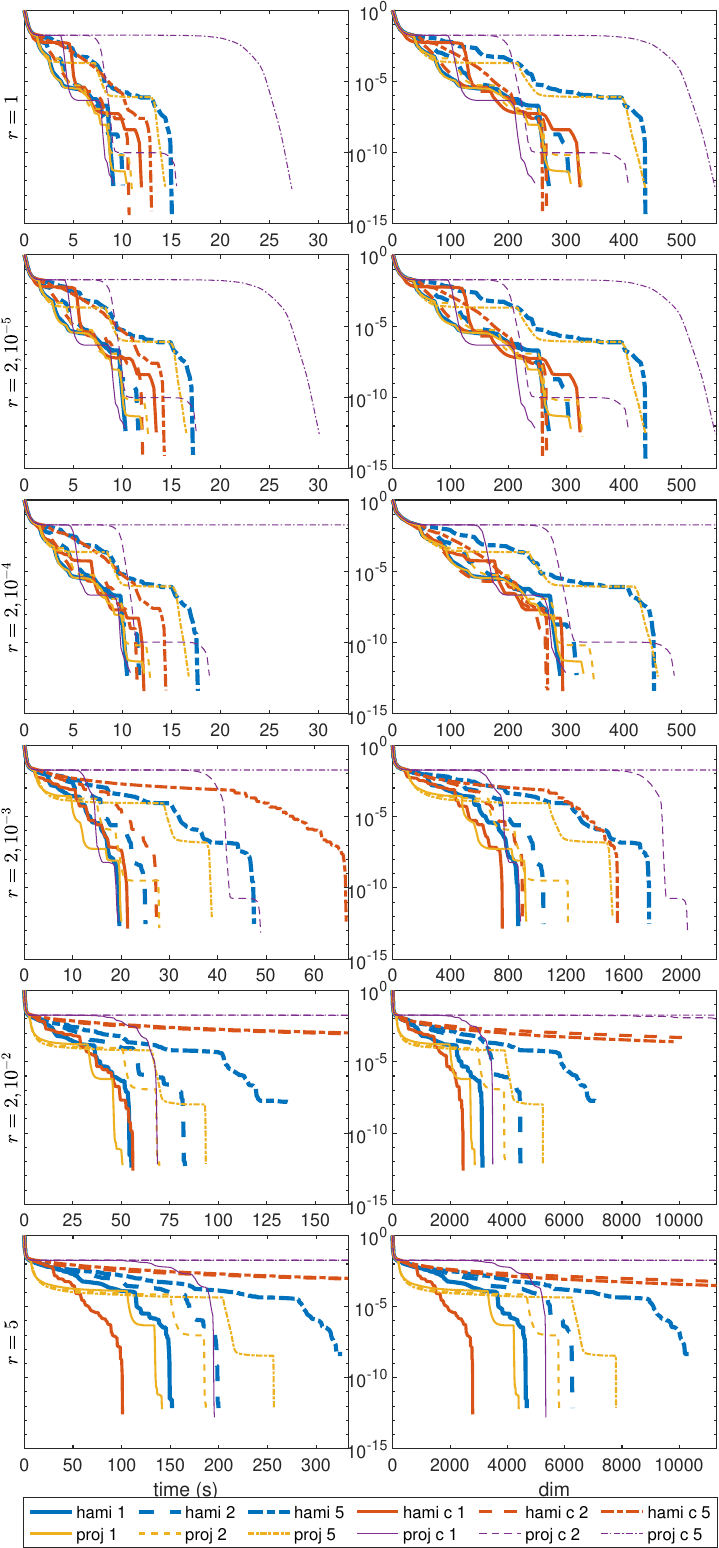}
	\caption{Rail at $n=79841$}
	\label{fig:rail}
\end{figure}
\begin{figure}[t]
	\centering
	\aptfigure{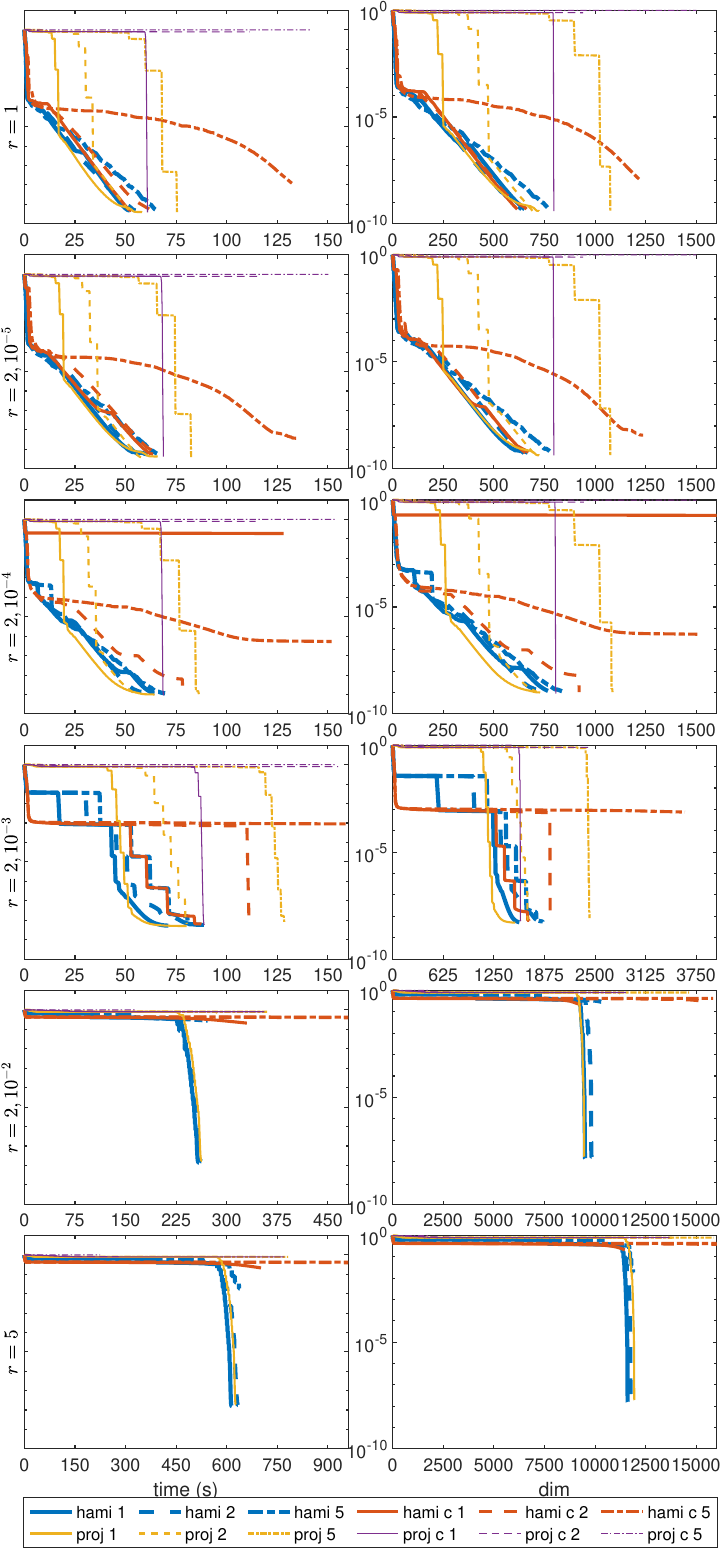}
	\caption{Lung2-: large truncation case}
	\label{fig:lung2}
\end{figure}

From the tables and figures, we can observe several notable features.

\subsection{Shift strategy}\label{para:shift-strategy}
In each shift strategy with different prescribed $s$, $s=1$ is usually the best choice because it requires the least number of iterations, generates the smallest $\Xi$, and consumes the least time with only a few exceptions.
Especially in the cases that the stochastic part is large (
$r=5$), there is no exceptions.
The reason why a projection onto a high-dimensional subspace fails to reduce the number of iterations 
is that the shifts are obtained by computing the terms on the corresponding CARE rather than the SCARE,
which would weaken the approximation for a large-dimensional subspace.

For methods that do not compute shifts frequently, including ``hami *'' and ``proj *'',
the cost for calculating the shifts is relatively tiny;
in contrast, for methods that compute shifts in each iteration, including ``hami c *'' and ``proj c *'',
the cost for calculating the shifts becomes significant and even dominate at the large stochastic part cases.

Aside from the shift cost, for 
cases with small stochastic part, 
the main cost is calculating $C_\gamma$;
for the large stochastic part cases, the primary cost is the compression.
This coincides with the analysis in \cref{ssec:complexity-list}.
The noticeable cost in the ``others'' category arises from the deep copying of data in the memory, which is unpredictable.
This drawback could be overcome by implementing the algorithm in C or a similar programming language. 


The shift strategy ``hami 1'' typically yields the best performance or is among the top few strategies.
Such result does not align precisely with the RADI method for CAREs (that favors ``hami c 1''), possibly due to the shifts are obtained by computing the terms on the corresponding CARE rather than the SCARE.

\subsection{Compression/Truncation}\label{para:truncation}
The number of retained rows during truncation, namely $l_{\trun}$, will conclusively determine the running time.
Consequently, a looser truncation criterion can significantly speed up the process but at the cost of accuracy.
For certain problematic scenarios, using a looser criterion is justifiable, as is illustrated in \cref{tab:results-lung2-} and \cref{fig:lung2}.

Comparing the results at $l=5$ in \cref{tab:results-lung2-} with those generated by RADI at $l=10$ in \cite[Table~3]{bennerBKS2018radi} on CARE ($r=1$), we observe that the compression would slow convergence. 
This is reasonable since some useful information is discarded in the compression, and it is a trade-off between convergence and storage.
Compression is necessary for the true SCARE case ($r>1$) to avoid an exponential increase of the size of $\Xi$.

\subsection{Comparison}\label{para:comparison}
In the experiment we did not compare our method (\cref{alg:an-radi-type-method-for-scares:sparse}) with some existing methods.
Here, we explain the rationale:
\begin{itemize}
	\item The comparison theorem based method \cite{freilingH2003properties} and LMI's method \cite{ramiZ2000linear} are not valid for large-scale problems.
	\item The homotopy method  \cite{zhangFCW2015homotopy}, and the Newton-type methods, including the basic variant \cite{dammH2001newtons,damm2004rational}, the modified variant \cite{guo2001iterative,ivanov2007iterations} and the GLE-driver variant \cite{fanWC2016smith},
		all face an inconspicuous but subtle and time-consuming challenge: validating the stopping criteria.
		Without a low-rank expression of the residual $\op C(X)$, one has to calculate it out explicitly and estimate its norm, resulting in a task over $n\times n$ matrices.
		In contrast, equipped with its low-rank expression \cref{eq:residual-total} and the discussion in \cref{ssec:storage-reduction}, it is reduced to a task over $rl_{\trun}\times rl_{\trun}$ matrices in our method.

	\item For  Newton-type methods, a critical challenge is finding a stabilizing initial approximation. \cite{fanWC2016smith} highlights this issue three times
, arguing that initial stabilization for Newton's method is a difficult open problem, and no resolution was provided during numerical tests.
		To the opposite, our method does not require any special initialization, which completely avoids to be trapped.
	\item For test issues, the GLE-driver Newton method \cite{fanWC2016smith} is only tested on an example with $r=2,\nnz(A_1)=n-1,\nnz(B_1)=2$; 
		the homotopy method \cite{zhangFCW2015homotopy} is only tested on the example \emph{Rail} with $r=2,\nnz(A_1)\approx 0.01\nnz(A),\nnz(B_1)\approx 0.1\nnz(B)$.
		In comparison, we test \emph{Rail} with as large as $r=5, \nnz(A_i)=\nnz(A),\nnz(B_i)=\nnz(B)$.
\end{itemize}


\section{Conclusion}\label{sec:concluding-remarks}
We have introduced an RADI-type method for computing the unique positive semi-definite stabilizing solution for large-scale stochastic continuous-time algebraic Riccati equations with sparse and low-rank matrices.
This efficient method benefits from the algebraic structure of the equations and builds upon the RADI method for solving classical continuous-time algebraic Riccati equations.
As evidence of its efficiency, this promising method successfully solves the standard benchmark problem, \emph{Rail}, involving four stochastic processes with the same sparsity at $n=79841,r=5, \nnz=553321\times 5$ in $100$ seconds with MATLAB on a standard PC without any prior information.

Similar to the RADI method, the performance of our approach heavily relies on the strategy for selecting shifts, among which we suggests employing the residual Hamiltonian shift in each iteration.
Due to lack of results on eigenvalue problem in the left semi-tensor product, we have to utilize the real part of a selected eigenvalue of the projected classical Hamiltonian matrix.

We believe that it is highly worthwhile to investigate the eigenvalue problem within the context of the left semi-tensor product, not only for its intrinsic significance but also for its potential to enhance the shift strategies and accelerate the method proposed in the paper.

\bibliographystyle{plain}
\bibliography{SCAREincorpReal-min}
\end{document}